\title{Ioana's superrigidity theorem and orbit equivalence relations}
\author{Samuel Coskey}
\address{Department of Mathematics\\Boise State University\\1910 University Dr\\Boise, ID\ \ 83725}
\email{scoskey@nylogic.org}
\urladdr{boolesrings.org/scoskey}
\theoremstyle{remark}
\newtheorem*{claimI}{Claim I}
\newtheorem*{claimII}{Claim II}
\DeclareMathOperator{\QEnd}{QEnd}
\DeclareMathOperator{\End}{End}
\DeclareMathOperator{\stab}{stab}
\newcommand{\PSL}{\mathop{\mathrm{PSL}}\nolimits}
\newcommand{\qiso}{\sim}
\newcommand{\oqiso}{\ord\qiso}
\begin{document}
\maketitle

\begin{abstract}
  In this expository article, we give a survey of Adrian Ioana's
  cocycle superrigidity theorem for profinite actions of Property~(T)
  groups, and its applications to ergodic theory and set theory.  In
  addition to a statement and proof of Ioana's theorem, this article
  features:
  \begin{itemize}
  \item An introduction to rigidity, including a crash course in Borel
    cocycles and a summary of some of the best-known superrigidity
    theorems;
  \item Some easy applications of superrigidity, both to ergodic
    theory (orbit equivalence) and set theory (Borel reducibility);
    and
  \item A streamlined proof of Simon Thomas's theorem that the
    classification of torsion-free abelian groups of finite rank is
    intractable.
  \end{itemize}  
\end{abstract}

\section{Introduction}

In the past fifteen years superrigidity theory has had a boom in the
number and variety of new applications.  Moreover, this has been
coupled with a significant advancement in techniques and results.  In
this article, we survey one such new result, namely Ioana's theorem on
profinite actions of property (T) groups, and some of its applications
in ergodic theory and in set theory.  In the concluding section we
highlight an application to the classification problem for
torsion-free abelian groups of finite rank.  The narrative is strictly
expository, with most of the material adapted from the work of Adrian
Ioana, myself, and Simon Thomas.

Although Ioana's theorem is relatively recent, it will be of interest
to readers who are new to rigidity because the proof is natural and
there are many immediate applications.  Therefore, we have taken care
to keep the non-expert in mind.  We do assume that the reader is
familiar with the notion of ergodicity of a measure-preserving action
and with unitary representations of countable groups.  We will not go
into great detail on Property~(T), since for our purposes it is enough
to know that $\SL_n(\ZZ)$ satisfies Property~(T) when $n>2$.  Rather,
we shall introduce it just when it's needed, and hopefully its key
appearance in the proof of Ioana's theorem will provide some insight
into its meaning.

The concept of superrigidity was introduced by Mostow and Margulis in
the context of studying the structure of lattices in Lie groups.
Here, $\Gamma$ is said to be a \emph{lattice} in the (real) Lie group
$G$ if it is discrete and $G/\Gamma$ admits an invariant probability
measure.  Very roughly speaking, Margulis showed that if $\Gamma$ is a
lattice in a simple (higher-rank) real Lie group $G$, then any
homomorphism from $\Gamma$ into an algebraic group $H$ lifts to an
algebraic map from $G$ to $H$.  This implies Mostow's theorem, which
states that any isomorphic lattices $\Gamma,\Lambda$ in a simple
(higher-rank) Lie group $G$ must be conjugate inside $G$.

We will leave this first form of rigidity on the back burner and
primarily consider instead a second form, initially considered by
Zimmer, which is concerned with group \emph{actions}.  (The connection
between the two forms of rigidity is that both can be cast in terms of
measurable cocycles, which will be introduced in the next section.
For the connection between cocycles and lifting homomorphisms, see
\cite[Example~4.2.12]{zimmer}.)  The basic notions are as follows.
Two probability measure-preserving actions $\Gamma\actson X$ and
$\Lambda\actson Y$ are said to be \emph{orbit equivalent} if there
exists a measure-preserving almost bijection $f\from X\into Y$ such
that $\Gamma x=\Gamma x'$ iff $\Lambda f(x)=\Lambda f(x')$.  They are
said to be \emph{isomorphic} if additionally there exists an
isomorphism $\phi\from\Gamma\into\Lambda$ such that $f(\gamma
x)=\phi(\gamma)f(x)$.  Essentially, Zimmer showed that any
(irreducible) ergodic action $\Gamma\actson X$ of a lattice in a
(higher rank) simple Lie group is \emph{superrigid} in the sense that
it cannot be orbit equivalent to another action of an algebraic group
$\Lambda\actson Y$ without being isomorphic to it.  (For elementary
reasons it is necessary to assume that $\Lambda$ acts freely on $Y$.)
See \cite[Theorem~5.2.1]{zimmer} for a weak statement of this result
and \cite[Section~1]{furman-oe} for further discussion.

It is natural to ask whether there exists an analog of Zimmer's
theorem in the context of general measure-preserving actions, that is,
with the algebraic hypothesis on $\Lambda$ removed.  Many rigidity
results have been established along these lines (for instance, see
\cite{furman-oe}, \cite{monodshalom}, \cite{kida-oe}).  One of the
landmark results in this direction was obtained recently by Popa
\cite{popa_paper}, who found a large class of measure-preserving
actions $\Gamma\actson X$ which are superrigid in the general sense
that $\Gamma\actson X$ cannot be orbit equivalent with another (free)
action without being isomorphic to it.  Specifically, his theorem
states that if $\Gamma$ is a Property~(T) group, then the free part of
its left-shift action on $X=2^\Gamma$ (the so-called Bernoulli action)
is an example of a superrigid action.  Following on Popa's work,
Ioana's theorem gives a second class of examples of superrigid
actions, namely the profinite actions of Property~(T) groups.

This article is organized as follows.  The second section gives some
background on Borel cocycles, a key tool in rigidity theory.  A
slightly weakened version of Ioana's theorem is stated in the third
section.  The proof itself is split between Section~4, which contains
a general purpose lemma, and Section~5, which contains the heart of
the argument.  Although these are largely unchanged from Ioana's own
account, I have inserted many additional remarks to smooth the
experience for the newcomer.

In Section~6 we give a couple of the easier applications of the main
theorem.  First, we show how to obtain many orbit inequivalent
profinite actions of $\SL_n(\ZZ)$.  We also explore applications to
logic and set theory by considering Borel reducibility.  In
particular, we point out some of the extra challenges one faces when
working in the purely set-theoretic (\emph{i.e.}, Borel) context, as
opposed to the more familiar measure context.

Finally, in the last section we use Ioana's theorem to give a
self-contained and slightly streamlined proof of Thomas's theorem
that the complexity of the isomorphism problem for torsion-free
abelian groups of finite rank increases strictly with the rank.

\section{Rigidity via cocycles}

We begin by introducing a slightly more expansive notion of orbit
equivalence rigidity.  If $\Gamma\actson X$ and $\Lambda\actson Y$ are
arbitrary Borel actions of countable groups, then a function $f\from
X\into Y$ is said to be a \emph{homomorphism of orbits} if $\Gamma
x=\Gamma x'$ implies $\Lambda f(x)=\Lambda f(x')$.  It is said to be a
\emph{homomorphism of actions} if additionally there exists a
homomorphism $\phi\from\Gamma\into\Lambda$ such that $f(\gamma
x)=\phi(\gamma)f(x)$.  (Note that these terms are not exactly
standard.)  Informally, we shall say that $\Gamma\actson X$ is
\emph{superrigid} if whenever $\Lambda\actson Y$ is a free action and
$f\from X\into Y$ is a homomorphism of orbits, then $f$ in fact arises
from a homomorphism of actions (that is, $f$ is equivalent to a
homomorphism of actions in a sense defined below).

Following Margulis and Zimmer, we shall require the language of Borel
cocycles to describe and prove superrigidity results.  A
\emph{cocycle} is an object which is associated with a given
homomorphism of orbits $f\from X\into Y$ as follows.  Observe that for
every $(\gamma,x)\in\Gamma\times X$, there exists a
$\lambda\in\Lambda$ such that $f(\gamma x)=\lambda f(x)$.  Moreover,
$\Lambda$ acts freely on $Y$ iff this $\lambda$ is always uniquely
determined by the data $f$, $\gamma$ and $x$.  In other words, in this
case $f$ determines a function $\alpha\from\Gamma\times X\into\Lambda$
which satisfies
\[f(\gamma x)=\alpha(\gamma,x)f(x)
\]
This map is called the \emph{cocycle corresponding to $f$}, and it is
easy to see that it is Borel whenever $f$ is.  Moreover, the cocycle
$\alpha$ satisfies the composition law
$\alpha(\gamma'\gamma,x)=\alpha(\gamma',\gamma x)\alpha(\gamma,x)$;
this is called the \emph{cocycle condition}.  See
Figure~\ref{fig_comp} for a visual depiction of the cocycle condition.

\begin{figure}[ht]
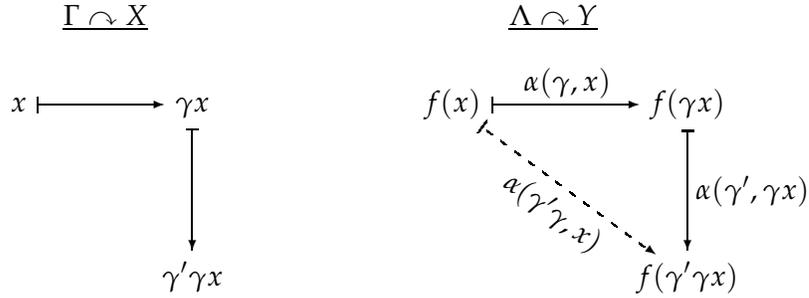

\begin{diagram}
  & \underline{\Gamma\actson X} & &&& & \underline{\Lambda\actson Y}\\
x & \rMapsto & \gamma x        &&& f(x) & \rMapsto^{\quad\alpha(\gamma,x)}
                                         & f(\gamma x) \\
  &          & \dMapsto        &&&      & \rdDMapsto_{\alpha(\gamma'\gamma,x)} 
                                         & \dMapsto_{\alpha(\gamma',\gamma x)}\\
  &          & \gamma'\gamma x &&&      && f(\gamma'\gamma x) \\
\end{diagram}
\caption{The cocycle condition: $\alpha(\gamma'\gamma
  x)=\alpha(\gamma',\gamma x)\alpha(\gamma,x)$.\label{fig_comp}}
\end{figure}

When $f$ is actually action-preserving, that is, $f(\gamma
x)=\phi(\gamma)f(x)$ for some homomorphism
$\phi\from\Gamma\into\Lambda$, then we have
$\alpha(\gamma,x)=\phi(\gamma)$, so that $\alpha$ is independent of
the second coordinate.  Conversely, if $\alpha$ is independent of the
second coordinate then one can define
$\phi(\gamma)=\alpha(\gamma,\cdot)$ and the composition law implies
that $\phi$ is a homomorphism.  In this situation, the cocycle is said
to be \emph{trivial}.

In practice, when establishing rigidity one typically shows that an
arbitrary cocycle (arising from a homomorphism of orbits) is
equivalent to a trivial cocycle (which therefore arises from a
homomorphism of actions).  Here, we say that homomorphisms of orbits
$f,f'\from X\into Y$ are called \emph{equivalent} if there exists a
Borel function $b\from X\into\Lambda$ such that $f'(x)=b(x)f(x)$,
\emph{a.e}.  (That is, they lift the same function on the quotient
spaces $X/\Gamma\into Y/\Lambda$).  In this case, the corresponding
cocycles $\alpha,\alpha'$ are said to be cohomologous.  It is easy to
check that $f,f'$ are equivalent via $b$ iff the corresponding
cocycles $\alpha,\alpha'$ satisfy the relation
$\alpha'(\gamma,x)=b(\gamma x)\alpha(\gamma,x)b(x)^{-1}$ \emph{a.e.};
this is called the \emph{cohomology relation}.  The easiest way to see
that this is the case is to glance at Figure~\ref{fig_cohom}.

\begin{figure}[ht]
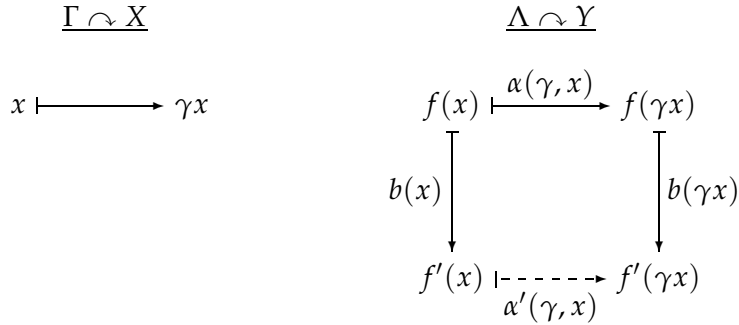

\begin{diagram}
  & \underline{\Gamma\actson X} & &&& & \underline{\Lambda\actson Y}\\
x & \rMapsto & \gamma x &&& f(x)            & \rMapsto^{\;\alpha(\gamma,x)\;}
                                                & f(\gamma x) \\
  &          &          &&& \dMapsto^{b(x)} &   & \dMapsto_{b(\gamma x)}\\
  &          &          &&& f'(x)           & \rDMapsto_{\alpha'(\gamma,x)}
                                                & f'(\gamma x) \\
\end{diagram}
\caption{The cohomology relation for cocycles:
  $\alpha'(\gamma,x)=b(\gamma
  x)\alpha(\gamma,x)b(x)^{-1}$.\label{fig_cohom}}
\end{figure}

We close this section by remarking that not all cocycles arise from
orbit-preserving maps.  An abstract \emph{cocycle} is any Borel
function satisfying the cocycle condition \emph{a.e.}, and two
cocycles are said to be \emph{cohomologous} if there exists a Borel
function $b$ satisfying the cohomology relation \emph{a.e.}  The most
powerful superrigidity results often have the conclusion that ``every
cocycle is cohomologous to a trivial cocycle.''  However, for most
applications there is no need for the extra strength gained by using
the abstract cocycle formulation.

\section{Ioana's theorem}

Cocycle superrigidity results were first established by Margulis and
Zimmer for cocycles $\Gamma\actson X\into\Lambda$ where $\Gamma$ is a
lattice in a higher rank Lie group acting ergodically on $X$.  These
results carried the additional hypothesis that $\Lambda$ is contained
in an algebraic group.  The first example of the most general form of
cocycle superrigidity, with the target $\Lambda$ arbitrary, was Popa's
result concerning Bernoulli actions.  In this section we shall discuss
Ioana's theorem, which establishes similar conclusions for profinite
actions.

Here, $\Gamma\actson X$ is said to be \emph{profinite} if as a
$\Gamma$-set, $X$ is the inverse limit of a family of finite
$\Gamma$-sets $X_n$.  In particular, there exist equivariant
projections $\pi_n\from X\into X_n$ and each element $x\in X$ can be
identified with the thread $(\pi_n(x))$.  We are interested in the
ergodic case; here, each $X_n$ is equipped with the uniform
probability measure and $\Gamma\actson X_n$ is transitive.

\begin{thm}[Ioana]
  \label{thm_ioana}
  Let $\Gamma\actson(X,\mu)$ be an ergodic, measure-preserving,
  profinite action, with invariant factor maps $\pi_n\from X\into
  X_n$.  Assume that $\Gamma$ has Property~(T).  Then for any cocycle
  $\alpha\from\Gamma\actson X\into\Lambda$, there exists $n$ and $a\in
  X_n$ such that the restriction of $\alpha$ to the action
  $\Gamma_a\actson\pi_n^{-1}(a)$ is cohomologous to a trivial cocycle.
\end{thm}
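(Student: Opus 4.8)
The plan is to follow the strategy pioneered by Popa, adapting it to the profinite setting: namely, to exploit Property~(T) to produce an \emph{almost-invariant} object for the cocycle, and then to upgrade almost-invariance to genuine invariance on a finite-index piece of the action. The key construction is the following. Given the cocycle $\alpha\from\Gamma\actson X\into\Lambda$, form the Koopman-type unitary representation of $\Gamma$ on the Hilbert space $\mathcal{H}=L^2(X\times X,\mu\times\mu)\otimes\ell^2(\Lambda)$, or more precisely on $L^2(X\times X)\otimes\ell^2(\Lambda)$ where $\Gamma$ acts diagonally on $X\times X$ and via $\alpha$-twisting on the $\ell^2(\Lambda)$ factor: $(\gamma\cdot\xi)(x,y,\lambda)=\xi(\gamma^{-1}x,\gamma^{-1}y,\alpha(\gamma^{-1},x)^{-1}\lambda\,\cdots)$, with the precise twist chosen so that the diagonal vectors supported near $\{x=y\}$ carrying the identity of $\Lambda$ are almost invariant. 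The point of using $X\times X$ (rather than $X$) is that the profinite structure gives a canonical family of almost-invariant vectors: the normalized indicators $\xi_n$ of the ``$n$-th diagonal'' $D_n=\{(x,y):\pi_n(x)=\pi_n(y)\}$, tensored with $\delta_e\in\ell^2(\Lambda)$. Because $\Gamma\actson X_n$ is transitive and the $X_n$ refine $X$, one checks that $\|\gamma\cdot\xi_n-\xi_n\|\to 0$ for each fixed $\gamma$, so this really is an almost-invariant sequence.

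Next I would invoke Property~(T): since $\Gamma$ has~(T) and $\mathcal H$ contains almost-invariant vectors, it contains a nonzero invariant vector $\xi$. Unwinding what it means for $\xi\in L^2(X\times X)\otimes\ell^2(\Lambda)$ to be fixed by the twisted diagonal action yields, after a standard disintegration/Fubini argument, a measurable family of finitely-supported (in fact $\ell^2$) functions $x\mapsto \xi_x\in\ell^2(\Lambda\times X)$ satisfying an equivariance identity of the form $\xi_{\gamma x}(\lambda,\gamma y)=\alpha(\gamma,x)\,\xi_x(\lambda,y)\,\cdots$ almost everywhere. Here is where the argument of Section~4 (the ``general purpose lemma'') is meant to be applied: one uses it to pass from this $\ell^2$-valued invariant object to an honest $\Lambda$-valued quantity. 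Roughly, for a.e.\ $x$ one looks at the support (or the ``barycenter'', or the maximal-weight atom) of $\xi_x$, which because of the equivariance relation transforms under $\Gamma$ like $\alpha$ does; since the support is a finite subset of $\Lambda$ (or a finite union of $\Lambda$-cosets of a finite set) and $\alpha$-cocycle values are being applied to it, one can deduce that $\alpha$ takes values, up to the coboundary $b$ that records the particular choice (e.g.\ $b(x)$ = the chosen atom), in a finite subgroup $\Lambda_0\le\Lambda$ — but actually the cleaner conclusion is that on a suitable piece the cocycle is cohomologous to a homomorphism.

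To extract the finite-index piece, I would use the profinite/ergodicity structure once more. The invariant vector $\xi$ lives, up to approximation, at some finite level $n$: that is, $\xi$ can be taken (or approximated well enough, then corrected) to be measurable with respect to $\pi_n\times\pi_n$ together with finitely many $\Lambda$-coordinates. Restricting attention to a single fiber $\pi_n^{-1}(a)$ for $a\in X_n$, the stabilizer $\Gamma_a$ acts ergodically on $\pi_n^{-1}(a)$ (this is a standard fact for ergodic profinite actions — ergodicity of the whole action forces ergodicity of each fiber under its stabilizer, at each level), and the restricted invariant object is now genuinely $\Gamma_a$-invariant with no ``almost''. On that fiber the equivariance relation for $\xi_x$ becomes exact, and the support-extraction step above goes through cleanly to produce a Borel $b\from\pi_n^{-1}(a)\into\Lambda$ with $b(\gamma x)\alpha(\gamma,x)b(x)^{-1}$ independent of $x$ for all $\gamma\in\Gamma_a$; that is exactly the statement that $\alpha\restriction(\Gamma_a\actson\pi_n^{-1}(a))$ is cohomologous to a trivial cocycle.

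The main obstacle, and the part requiring real care, is the middle step: converting the abstract $\Gamma$-invariant vector $\xi$ into a well-defined $\Lambda$-valued cocycle-coboundary. One must show the invariant vector can be chosen so that each slice $\xi_x$ has finite support (not merely lies in $\ell^2$), so that ``pick the maximal atom'' makes measurable sense and behaves equivariantly; this uses a clever choice of the representation space (some formulation on $X\times X$ with the ``$\alpha(\gamma,x)\alpha(\gamma,y)^{-1}$'' twist, whose invariant vectors are automatically supported on graphs of partial bijections) rather than a naive one. A secondary subtlety is the level-$n$ descent: a priori $\xi$ is not supported at any finite level, so one must approximate it by a level-$n$ vector, observe that for large $n$ this approximant is still almost invariant under a large finite set of generators, and re-apply~(T) (or a direct averaging argument) to land an exactly invariant vector that \emph{is} $\pi_n$-measurable, before restricting to the fiber. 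Both of these are exactly the technical points that the ``general purpose lemma'' of Section~4 is designed to isolate, so the proof proper will consist of setting up the representation, citing that lemma, and performing the ergodic-fiber bookkeeping.
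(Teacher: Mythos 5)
Your setup is essentially the paper's: the representation of $\Gamma$ on $L^2(X\times X\times\Lambda)$ via $\gamma(x,x',\lambda)=(\gamma x,\gamma x',\alpha(\gamma,x)\lambda\alpha(\gamma,x')^{-1})$, the normalized indicators $\xi_n$ of the level-$n$ diagonals as an almost-invariant family (justified by the $L^2$-density of the $\chi_{\pi_n^{-1}(a)}$'s), and Property~(T) to produce a genuine invariant vector. Up to that point you are on track. Where the proposal goes astray is in what happens next. You want to extract the coboundary $b$ directly from the invariant vector $\eta$ on $X\times X\times\Lambda$, by looking at the support or ``maximal atom'' of each slice $\eta_x$, and you correctly flag that this requires controlling the shape of that slice (finite support, uniqueness of a large atom, measurability of the selection) --- but you leave that step unresolved, calling it ``the part requiring real care.'' That is precisely the gap: the paper never attempts this direct extraction from $\eta$. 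Instead, after Property~(T) produces $\eta$ with $\|\eta-\xi_n\|\le\delta$ for some fixed $n$, it restricts $\eta$ to $\bigcup_a\pi_n^{-1}(a)\times\pi_n^{-1}(a)\times\Lambda$, decomposes the restriction as a normalized average of $\Gamma_a$-invariant pieces $\eta_a$, and uses a simple averaging argument to find one $a\in X_n$ with $\|\eta_a-\xi_a\|\le\delta$. From that it deduces, for all $\gamma\in\Gamma_a$, the quantitative bound $(\mu_a\times\mu_a)\{(x,x'):\alpha(\gamma,x)=\alpha(\gamma,x')\}\ge C>7/8$, hence that each $\alpha(\gamma,\cdot)$ has a single ``very likely'' value on the fiber. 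Only then does the Section~4 lemma enter: that lemma (Theorem~\ref{thm_untwist}/Lemma~\ref{lem_close}) takes the likely-value hypothesis as input and carries out the coboundary extraction in a \emph{different}, simpler representation, on $\pi_n^{-1}(a)\times\Lambda$ with the twist $\alpha(\gamma,x)\lambda\beta(\gamma,x)^{-1}$, where the ``unique atom of mass $>1/2$'' argument works because the hypothesis forces $\|\gamma\xi-\xi\|<1/2$ uniformly.

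Two secondary misreadings compound this. First, you take the ``general purpose lemma'' of Section~4 to be about upgrading an invariant vector on $X\times X\times\Lambda$ to a coboundary and about a ``level-$n$ descent''; it is neither --- it is the untwisting statement just described, with no profinite structure in it. Second, your proposed level-$n$ descent (approximate $\eta$ by a $\pi_n$-measurable vector, then re-invoke~(T) or an averaging argument to land an exactly invariant $\pi_n$-measurable vector) is not needed and not what happens: Property~(T) is applied exactly once, the level $n$ is fixed at that moment because $\eta$ is $\delta$-close to the specific $\xi_n$ in the almost-invariant sequence, and the descent to a single fiber $a$ is just a pigeonhole over the orthogonal decomposition $\eta'=\frac{1}{\sqrt{|X_n|}}\sum_a\eta_a$. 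In short, your proposal has the right representation and the right almost-invariant vectors, but the crucial bridge --- turn the invariant vector into a uniform $>7/8$ likelihood estimate on one fiber, and \emph{then} hand off to the untwisting lemma --- is missing, and the direct-extraction route you sketch in its place has exactly the unproved finite-support/unique-atom difficulty you yourself point out.
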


In other words, the conclusion is that $\Gamma\actson X$ is
``virtually superrigid'' in the sense that any orbit preserving map,
after it is restricted to a finite index component of the left-hand
side, comes from an action preserving map.  Ioana's theorem is
interesting when contrasted with Popa's theorem; while Bernoulli
actions are strongly mixing, profinite actions are highly non-mixing.
Indeed, for each $n$, $\Gamma$ just permutes the blocks
$\pi_n^{-1}(a)$, for $a\in X_n$, and it follows that $\bigcup_{a\in
  X_n}X_a\times X_a$ is a $\Gamma$-invariant subset of $X\times X$.

We remark that although our variant of Ioana's theorem is sufficient
for most applications, it is weaker than the state of the art in
several ways.  First, Ioana requires only that $\Gamma$ have the
\emph{relative} Property~(T) over some infinite normal subgroup $N$
such that $\Gamma/N$ is finitely generated.  Second, Ioana also shows
that $\alpha$ is equivalent to a cocycle defined on all of $X$.  Last,
Furman has generalized the statement by replacing profinite actions
with the more general class of compact actions.  ($\Gamma\actson X$ is
said to be \emph{compact} if when regarded as a subset of
$\aut(X,\mu)$ it is precompact in a suitable topology.)

\section{Cocycle untwisting}

We begin with the following preliminary result, which roughly speaking
says that if $\alpha\from\Gamma\actson X\into\Lambda$ is a cocycle,
and if for each $\gamma$ it has a ``very likely'' value, then $\alpha$
is cohomologous to the map which always takes on this likely value.
In particular, in this case $\alpha$ is cohomologous to a trivial
cocycle.

\begin{thm}
  \label{thm_untwist}
  Let $\Gamma\actson(X,\mu)$ be ergodic and measure-preserving, and
  let $\alpha\from\Gamma\actson X\into\Lambda$ be a cocycle.  Suppose
  that for all $\gamma\in\Gamma$ there exists
  $\lambda_\gamma\in\Lambda$ such that
  \[\mu\set{x\mid\alpha(\gamma,x)=\lambda_\gamma}\geq C>7/8\;.
  \]
  Then the map $\phi(\gamma)=\lambda_\gamma$ is a homomorphism and
  $\alpha$ is cohomologous to it.
\end{thm}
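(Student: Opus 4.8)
The natural approach is to define, for each $\gamma$, the ``likely set'' $A_\gamma=\set{x\mid\alpha(\gamma,x)=\lambda_\gamma}$, and to use the hypothesis that each $A_\gamma$ has measure $>7/8$ together with the cocycle condition to force $\gamma\mapsto\lambda_\gamma$ to be a homomorphism.  The point is that for any $\gamma,\gamma'$, on the set $A_\gamma\cap\gamma^{-1}A_{\gamma'}\cap A_{\gamma'\gamma}$ we can evaluate the cocycle identity $\alpha(\gamma'\gamma,x)=\alpha(\gamma',\gamma x)\alpha(\gamma,x)$ directly and read off $\lambda_{\gamma'\gamma}=\lambda_{\gamma'}\lambda_\gamma$.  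So the first step is to check that this triple intersection is nonempty: since $\mu(A_\gamma)>7/8$, $\mu(\gamma^{-1}A_{\gamma'})=\mu(A_{\gamma'})>7/8$ (the action is measure-preserving), and $\mu(A_{\gamma'\gamma})>7/8$, the union of the three complements has measure $<3/8<1$, so the intersection has positive measure and in particular is nonempty.  This immediately gives that $\phi(\gamma)=\lambda_\gamma$ is a homomorphism.

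**Untwisting.**  Next I would construct the Borel transfer function $b\from X\into\Lambda$ witnessing that $\alpha$ is cohomologous to $\phi$; concretely I want $b$ with $\phi(\gamma)=b(\gamma x)\alpha(\gamma,x)b(x)^{-1}$ a.e., equivalently $\alpha(\gamma,x)=b(\gamma x)^{-1}\phi(\gamma)b(x)$.  The idea is that $b(x)$ should record the ``discrepancy'' between $\alpha(\cdot,x)$ and $\phi$.  Here is where ergodicity enters: I would like to pin down $b$ on an orbit by a single choice and then propagate.  A clean way to do this is to consider the cohomologous cocycle $\beta(\gamma,x)=\phi(\gamma)^{-1}\alpha(\gamma,x)$ (this is the cocycle one gets after twisting by $\phi$, which is legitimate since $\phi$ is a homomorphism, hence a trivial cocycle), and observe that $\beta(\gamma,x)=e$ whenever $x\in A_\gamma$, i.e. with probability $>7/8$ for each $\gamma$.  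So it suffices to prove the theorem in the special case $\phi\equiv e$: a cocycle $\beta$ with $\beta(\gamma,\cdot)=e$ on a set of measure $>7/8$ for every $\gamma$ is a coboundary, i.e. $\beta(\gamma,x)=b(\gamma x)^{-1}b(x)$ for a Borel $b$.

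**The coboundary step.**  To build $b$ for the reduced problem, I would exploit that the relation ``$x\sim y$ iff $y=\gamma x$ for some $\gamma$ with $\beta(\gamma,x)=e$'' is, up to null sets, all of the orbit equivalence relation — indeed within a single orbit, given $x$ and $\gamma x$, one can look at the coset structure: the set of $\gamma$ with $\beta(\gamma,x)=e$ is ``large'' in $\Gamma$ in an appropriate sense, and using a pigeonhole/majority argument across translates one shows that for a.e. $x$ and every $\gamma$, one can write $\gamma=\gamma_2\gamma_1^{-1}$ with $\beta(\gamma_1,x)=\beta(\gamma_2,x)=e$ on a common positive-measure set, forcing a well-defined value.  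More carefully: fix (using ergodicity and a maximality argument) a Borel transversal-like selection, set $b(x)$ to be the a.e.-constant value of $\beta(\gamma,x)$ along a suitable ``spanning'' family, and verify the cocycle identity makes this consistent.  I expect the main obstacle to be precisely this last step — making the a.e. bookkeeping rigorous, since one must check simultaneously over all $\gamma$ that the set where $\beta(\gamma,x)=e$ is ``coherent'' enough to define $b$, and the $7/8$ threshold is exactly what is needed so that for any $\gamma,\gamma'$ the sets $A_\gamma$, $\gamma^{-1}A_{\gamma'}$, $A_{\gamma'\gamma}$, and (for a second pigeonhole) a further translate all overlap. Ergodicity is then invoked to upgrade ``positive measure'' to ``full measure'' for the final coherence statement defining $b$ and for verifying it works for every $\gamma$ simultaneously.
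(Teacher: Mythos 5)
Your opening step (that $\phi$ is a homomorphism by intersecting $A_\gamma$, $\gamma^{-1}A_{\gamma'}$, $A_{\gamma'\gamma}$) matches the paper exactly. The rest of the proposal has two genuine gaps.

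First, the proposed reduction to the case $\phi\equiv e$ fails: for nonabelian $\Lambda$, the function $\beta(\gamma,x)=\phi(\gamma)^{-1}\alpha(\gamma,x)$ is not a cocycle, since $\beta(\gamma'\gamma,x)=\phi(\gamma)^{-1}\phi(\gamma')^{-1}\alpha(\gamma',\gamma x)\alpha(\gamma,x)$ while $\beta(\gamma',\gamma x)\beta(\gamma,x)=\phi(\gamma')^{-1}\alpha(\gamma',\gamma x)\phi(\gamma)^{-1}\alpha(\gamma,x)$, and these do not agree unless $\phi(\gamma)$ commutes past $\alpha$. Nor is this a cohomology transformation: twisting by $b$ gives $b(\gamma x)\alpha(\gamma,x)b(x)^{-1}$, which is not of the form $\phi(\gamma)^{-1}\alpha(\gamma,x)$ for any choice of $b$. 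So one cannot ``subtract off'' $\phi$ and must handle the two cocycles $\alpha$ and $\phi$ simultaneously (the paper does this by proving a symmetric Lemma~\ref{lem_close}: if $\alpha,\beta$ agree with probability $>7/8$ for each $\gamma$, they are cohomologous).

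Second, and more seriously, the coboundary step does not work as described. The difficulty you flag as ``the main obstacle'' is indeed the whole theorem, and the tools you propose to close it do not apply here. There is no Borel transversal (the orbit equivalence relations of interest are nowhere smooth), and there is no meaningful notion of ``large subset of $\Gamma$'' available: the set $\set{\gamma : \beta(\gamma,x)=e}$ depends on $x$, the hypothesis bounds $\mu(A_\gamma)$ for each fixed $\gamma$, and one cannot interchange these quantifiers by a pigeonhole over the countable group $\Gamma$ without an invariant mean (which Property~(T) groups lack). The phrase ``the a.e.-constant value of $\beta(\gamma,x)$'' is also not meaningful, since for fixed $x$ this varies over $\gamma$ in an uncontrolled way. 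The paper bridges this gap with a Hilbert-space argument that your outline does not anticipate: one forms the twisted action of $\Gamma$ on $X\times\Lambda$ by $\gamma(x,\lambda)=(\gamma x,\alpha(\gamma,x)\lambda\phi(\gamma)^{-1})$, takes $\xi=\chi_{X\times e}$ in $L^2(X\times\Lambda)$, observes that $\seq{\gamma\xi,\xi}=\mu\set{x:\alpha(\gamma,x)=\phi(\gamma)}>7/8$ so that by the law of cosines $\norm{\gamma\xi-\xi}<1/2$ uniformly, extracts a genuinely invariant vector $\eta$ within distance $1/2$ of $\xi$ (the minimal-norm element of the closed convex hull of $\Gamma\cdot\xi$), and defines $b(x)$ to be the unique $\lambda$ with $\abs{\eta(x,\lambda)}>1/2$. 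Invariance of $\eta$ gives the cohomology relation where $b$ is defined; ergodicity together with a Chebyshev estimate shows $b$ is defined a.e. This is where the constant $7/8$ is actually used (it forces $\norm{\gamma\xi-\xi}^2=2-2\seq{\gamma\xi,\xi}<1/4$), not in the combinatorial overlaps your sketch gestures at.
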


It is easy to see that $\phi$ must be a homomorphism: indeed, the
hypothesis guarantees that there is a non-null set of $x$ for which
$\phi(\gamma'\gamma)=\alpha(\gamma'\gamma,x)=\alpha(\gamma',\gamma
x)\alpha(\gamma,x)=\phi(\gamma')\phi(\gamma)$.  Hence it remains only
to establish the following result.

\begin{lem}
  \label{lem_close}
  Let $\Gamma\actson(X,\mu)$ be ergodic and measure-preserving, and
  let $\alpha,\beta\from\Gamma\actson X\into\Lambda$ be cocycles.
  Suppose that for all $\gamma\in\Gamma$
  \[\mu\set{x\mid\alpha(\gamma,x)=\beta(\gamma,x)}\geq C>7/8\;.
  \]
  Then $\alpha$ is cohomologous to $\beta$.
\end{lem}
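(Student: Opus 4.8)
The plan is to build the transfer function $b\from X\into\Lambda$ directly, by exhibiting it as a limit (in an appropriate sense) of candidate values and using ergodicity to pin it down. The natural first move is to consider, for each pair $x,x'\in X$, the element $\beta(\gamma,x)\alpha(\gamma,x)^{-1}$ and ask when it is ``stable''. More precisely, I would look at the cocycle $\delta$ on $\Gamma\actson X\times X$ (with the product measure) defined by $\delta(\gamma,(x,x'))=\beta(\gamma,x')\cdot\text{(something)}\cdot\alpha(\gamma,x)^{-1}$; the cocycle condition for $\alpha$ and $\beta$ should make an appropriate combination into a cocycle, and the $7/8$ hypothesis will force it, on a large set, to land in a single coset. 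The key combinatorial input is that if two events each have measure $>7/8$ then their intersection has measure $>3/4>1/2$, so by a counting/pigeonhole argument one can ``chain'' equalities: for a positive-measure set of $x$, and for each $\gamma$, we have $\alpha(\gamma,x)=\beta(\gamma,x)$ on a set so large that these sets must overlap substantially as $\gamma$ varies.

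Concretely, here is the route I expect to work. Define $A_\gamma=\set{x\mid\alpha(\gamma,x)=\beta(\gamma,x)}$, so $\mu(A_\gamma)\geq C>7/8$ for all $\gamma$. Fix a point $x_0$ in a conull set where the cocycle identities hold, and for $x\in X$ define $b(x)$ as follows: since $\Gamma$ acts ergodically, for a.e.\ $x$ there is $\gamma$ with $x=\gamma x_0$ (after passing to the ergodic decomposition this is not literally true, so instead one argues measure-theoretically) --- the cleaner formulation is to set $b(x)=\beta(\gamma,x_0)\alpha(\gamma,x_0)^{-1}$ whenever $x=\gamma x_0$ and to check this is well-defined a.e. Well-definedness is exactly where the $7/8$ bound is used: if $x=\gamma x_0=\gamma' x_0$ then $\gamma^{-1}\gamma'$ fixes $x_0$, and one must show $\beta(\gamma,x_0)\alpha(\gamma,x_0)^{-1}=\beta(\gamma',x_0)\alpha(\gamma',x_0)^{-1}$; expanding via the cocycle condition reduces this to a statement about the restriction to the stabilizer, which one handles by observing that the relevant sets $A_\gamma$ are so large that the required coincidences occur on a positive-measure (hence, by a mixing-type or ergodicity argument, conull) set. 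Once $b$ is defined and Borel, one verifies the cohomology relation $\beta(\gamma,x)=b(\gamma x)\alpha(\gamma,x)b(x)^{-1}$ a.e.\ by a direct substitution using the cocycle conditions for $\alpha$ and $\beta$ together with the defining property of $b$.

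I anticipate the main obstacle to be the well-definedness of $b$, i.e.\ the step that genuinely consumes the constant $7/8$. The honest way to carry it out avoids picking a single base point: instead, consider the set $G=\set{(x,x')\in X\times X\mid \beta(\gamma,x)\alpha(\gamma,x)^{-1}=\beta(\gamma,x')\alpha(\gamma,x')^{-1}\text{ for a suitable }\gamma\text{ with }\gamma x=x'}$, show it is (up to null sets) an equivalence relation refining the orbit equivalence relation, use that each equivalence class meets each orbit in a set of relative measure $>3/4$ (from intersecting two sets of measure $>7/8$ inside a third), and conclude via ergodicity that there is essentially one class, on which $b$ is then unambiguously defined. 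The quantitative $7/8$ (equivalently, that $2C-1>3/4$, or that $3C-2>1/2$, making a three-fold chaining argument go through) is precisely what makes the chaining close up; with a weaker bound the equivalence relation can fail to be transitive on a large set. After that hurdle, verifying the cohomology relation and Borelness of $b$ are routine bookkeeping with the cocycle identities.
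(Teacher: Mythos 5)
Your proposal takes a genuinely different route from the paper's, and as sketched it has a gap I don't see how to close. The paper's proof is Hilbert--space theoretic: on $L^2(X\times\Lambda)$ with the twisted action $\gamma(x,\lambda)=(\gamma x,\,\alpha(\gamma,x)\lambda\beta(\gamma,x)^{-1})$, the vector $\xi=\chi_{X\times e}$ satisfies $\norm{\gamma\xi-\xi}<1/2$ \emph{simultaneously} for all $\gamma$, so the closed convex hull of $\set{\gamma\xi}$ contains a genuinely invariant vector $\eta$ with $\norm{\eta-\xi}<1/2$ (the unique element of minimal norm). The transfer function $b$ is then read off from $\eta$ by a threshold/Chebyshev estimate, and ergodicity upgrades ``$b$ is defined on a non-null set'' to ``$b$ is defined a.e.''\ because the domain of definition of $b$ is $\Gamma$-invariant.

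The obstruction to your direct construction is twofold, and both issues bite before you get to the cohomology bookkeeping. First, for a countable group acting on a non-atomic probability space every orbit $\Gamma x_0$ is null; ergodicity says \emph{invariant} sets are null or conull, not that any orbit is conull. So defining $b$ along the orbit of a base point does not define it a.e., and there is no invariant notion of ``relative measure $>3/4$'' on a null orbit, which makes your set $G$ and the phrase ``each equivalence class meets each orbit in a set of relative measure $>3/4$'' not meaningful at the scale of $\mu$. Second, your chaining argument requires the sets $A_\gamma$ to overlap for all $\gamma\in\Gamma$ at once, but the intersection of countably many sets of measure $\geq C$ can be empty even when $C>7/8$; the $7/8$ bound controls pairwise and triple overlaps, not a countable intersection. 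This uniform-over-$\Gamma$ difficulty is precisely what the convexity/circumcenter step in the paper handles, since the minimal-norm vector in the closed convex hull is close to \emph{every} $\gamma\xi$ simultaneously --- there is no elementary pigeonhole replacement for that step. Your instinct that ergodicity should promote positive measure to full measure is correct and does appear in the real proof, but only \emph{after} an invariant $L^2$ vector has been produced; your sketch has no mechanism to get a candidate $b$ defined on any positive-measure set in the first place.
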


We understand this result to say that if $\alpha$ and $\beta$ are
close in an $L^\infty$ sense, then they are cohomologous.  It follows
upon similar results of Popa and Furman, which draws the same
conclusion in the case that $\alpha$ and $\beta$ are close in an
appropriate $L^1$ sense (for instance, see
\cite[Theorem~4.2]{furman-popa}).  Ioana's proof, given below, may be
safely skipped until after reading the next section.

\begin{proof}[Proof of Lemma \ref{lem_close}]
  Let $\Gamma\actson X\times\Lambda$ be the action given by
  \[\gamma(x,\lambda)
    =\parens{\gamma x,\alpha(\gamma,x)\lambda\beta(\gamma,x)^{-1}}
  \]
  (this is an action thanks to the cocycle condition), and consider
  the corresponding left-regular representation.  The reason we use
  this representation is that $\alpha$ is close to $\beta$ iff a
  particular vector is close to being invariant.  Namely, let
  \[\xi=\chi_{X\times e}
  \]
  (read: the characteristic function of $X\times e$) and notice that
  \[\seq{\gamma\xi,\xi}=\mu\set{x\mid\alpha(\gamma,x)=\beta(\gamma,x)}\;.
  \]
  Using this together with the law of cosines, the hypothesis now
  translates to say that $\norm{\gamma\xi-\xi}\leq C<1/2$ for all
  $\gamma\in\Gamma$.  It is not difficult to see that this implies
  that there is an \emph{invariant} vector $\eta$ such that
  $\norm{\eta-\xi}<1/2$.  (Indeed, letting $S$ denote the convex hull
  of $\overline{\Gamma\cdot\xi}$, it is easily seen that there exists
  a unique vector $\eta\in S$ of minimal norm; this $\eta$ is
  necessarily invariant.)
  
  The idea for the conclusion of the proof is as follows.  If we had
  $\eta=\chi_{\mathsf{graph}(b)}$ for some function $b\from
  X\rightarrow\Lambda$, then we would be done.  Indeed, in this case
  the invariance of $\eta$ would mean that $b(x)=\lambda$ iff
  $b(\gamma x)=\alpha(\gamma,x)\lambda\beta(\gamma,x)^{-1}$, so that
  $b(\gamma x)=\alpha(\gamma,x)b(\gamma x)\beta(\gamma,x)^{-1}$.  In
  other words, $b$ would witness that $\alpha$ is cohomologous to
  $\beta$.  The fact that $\norm{\eta-\xi}<1/2$ implies that this is
  close to being the case.

  We actually define $b(x)=$ the $\lambda$ such that
  $\abs{\eta(x,\lambda)}>1/2$, if it exists and is unique.  The above
  computation shows that when $b(x),b(\gamma x)$ are both defined, the
  cohomology relation holds.  Moreover, the set where $b$ is defined
  is invariant, so by the ergodicity of $\Gamma\actson X$, it suffices
  to show that this set is non-null.  In fact, since $\eta$ and $\xi$
  are close, $b$ must take value $e$ on a non-null set:
  \begin{align*}
    \mu\set{x:\abs{\xi(x,e)-\eta(x,e)}\geq1/2}
    & \;\leq\; 4\int_{\set{x:\abs{\xi(x,e)-\eta(x,e)}\geq1/2}}\abs{\xi-\eta}^2\\
    & \;\leq\; 4\norm{\xi-\eta}^2\\
    & \;<\; 1
  \end{align*}
  This shows $\set{x:\abs{\eta(x,e)}>\frac12}$ is non-null, as
  desired.  A similar computation is used to show that with
  probability $1$, $e$ is the unique such element of $\lambda$.
\end{proof}

\section{Ioana's proof}

\subsection*{What we want}

We wish to find some $n$ and $a\in X_n$ such that for all
$\gamma\in\Gamma_a$,
\begin{equation}
  \label{eqn_want}
  (\mu_a\times\mu_a)\set{x,x'\mid\alpha(\gamma,x)=\alpha(\gamma,x')}\geq C>7/8
\end{equation}
where $\mu_a$ denotes the normalized restriction of $\mu$ to
$\pi_n^{-1}(a)$.  This would imply, by a straightforward computation,
that for each $\gamma\in\Gamma_a$ there exists a $\lambda\in\Lambda$
such that
\[\mu_a\set{x\mid\alpha(\gamma,x)=\lambda}\geq C>7/8
\]
and then we would be finished thanks to Theorem~\ref{thm_untwist}.

\subsection*{What we have}

Unfortunately, it is only immediately possible to obtain that the
quantities in Equation~\eqref{eqn_want} tend to $1$ on average, at a
rate depending on $\gamma$.  That is, for each $\gamma\in\Gamma$ we
have
\begin{equation}
  \label{eqn_have}
  \lim_{n\rightarrow\infty}\frac{1}{\abs{X_n}}\sum_{a\in X_n}(\mu_a\times\mu_a)
  \set{x,x'\mid\alpha(\gamma,x)=\alpha(\gamma,x')}=1\;.
\end{equation}
To see this, first note that it is equivalent to
\[\lim_{n\rightarrow\infty}\sum_{\lambda\in\Lambda}
  \parens{\frac{1}{\abs{X_n}}\sum_{a\in X_n}
   \mu_a\set{x\mid\alpha(\gamma,x)=\lambda}^2}=1\;.
\]
Now, we generally have that for any subset $S\subset X$,
\begin{equation}
  \label{eqn_limit}
  \lim_{n\rightarrow\infty}\frac{1}{\abs{X_n}}\sum_{a\in X_n}\mu_a(S)^2=\mu(S)\;.
\end{equation}
This is because the family $\{\chi_{\pi_n^{-1}(a)}\mid a\in
X_n,n\in\omega\}$ is dense in $L^2$, and while the right-hand side is
the norm-squared of $\chi_S$, the left-hand side is the norm-squared
of $\chi_S$ projected onto the span of $\{\chi_{\pi_m^{-1}(a)}\mid
a\in X_m,m\leq n\}$.  Finally, just apply Equation~\eqref{eqn_limit}
to each set $S=\set{x\mid\alpha(\gamma,x)=\lambda}$, and use the
dominated convergence theorem to pass the limit through the sum over
all $\lambda\in\Lambda$.

\subsection*{The proof}

The gap between what he have (the asymptotic information) and what we
want (the uniform information) is bridged by Property~(T).  Once again
the first step is to consider an appropriate representation, this time
one which compares the values of $\alpha(\gamma,x)$ as $x$ varies.
That is, we let $\Gamma\actson X\times X\times\Lambda$ by
\[\gamma(x,x',\lambda)=
  \parens{\gamma x,\gamma x',\alpha(\gamma,x)\lambda\alpha(\gamma,x')^{-1}}
\]
and consider the left-regular unitary representation corresponding to
this action.  The idea, very roughly, is that the degree to which
$\alpha(\gamma,x)$ is independent of $x$ will be measured by how close
a particular vector is to being $\Gamma$-invariant.

More precisely, for each $n$ define an orthonormal family of vectors
$\xi_a$ for $a\in X_n$ by
\begin{align*}
  \xi_a&=\abs{X_n}\cdot\chi_{\pi_n^{-1}(a)\times\pi_n^{-1}(a)\times e}\;,\\
  \intertext{and consider their normalized average}
  \xi_n&=\frac{1}{\sqrt{\abs{X_n}}}\sum_{a\in X_n}\xi_a\;,
\end{align*}
Then a simple calculation shows that
\begin{align*}
  \seq{\gamma\xi_a,\xi_a}
  &=(\mu_a\times\mu_a)\set{x,x'\mid\alpha(\gamma,x)=\alpha(\gamma,x')}
  \;,\text{ and}\\
  \seq{\gamma\xi_n,\xi_n}
  &=\frac{1}{\abs{X_n}}\sum_{a\in X_n}
  (\mu_a\times\mu_a)\set{x,x'\mid\alpha(\gamma,x)=\alpha(\gamma,x')}\;.
\end{align*}
So now, ``what we have'' and ``what we want'' can be translated as
follows: we have that the $\xi_n$ form a family of almost invariant
vectors, and we want a single $n$ and $a\in X_n$ such that $\xi_a$ is
nearly invariant, uniformly for all $\gamma\in\Gamma_a$.

The remainder of the argument is straightforward.  Since the $\xi_n$
form a family of almost invariant vectors, Property~(T) implies that
there exists $n$ and an invariant vector $\eta$ such that
$\norm{\eta-\xi_n}\leq\delta$.  Let $\eta'$ be the restriction of
$\eta$ to the set $\cup_{a\in
  X_n}\parens{\pi_n^{-1}(a)\times\pi_n^{-1}(a)\times\Lambda}$.  Since
this set is invariant, we have that $\eta'$ is invariant as well.
Since $\xi_n$ is supported on this set, we retain the property that
$\norm{\eta'-\xi_n}\leq\delta$.

Now, we simply express $\eta'$ as a normalized average of orthogonal
$\Gamma_a$-invariant vectors.  More specifically, write
\[\eta'=\frac{1}{\sqrt{\abs{X_n}}}\sum_{a\in X_a}\eta_a
\]
where $\eta_a$ is the appropriately rescaled restriction of $\eta'$ to
the set $\pi_n^{-1}(a)\times\pi_n^{-1}(a)\times\Lambda$.  Then by the
law of averages, we must have \emph{some} $a\in X_n$ such that
$\norm{\eta_a-\xi_a}\leq\delta$.  Moreover, $\eta_a$ is
$\Gamma_a$-invariant, so that for all $\gamma\in\Gamma_a$ we have
$\seq{\gamma\eta_a,\eta_a}=1$.  It follows that by an appropriate
choice of $\delta$, we can make $\seq{\gamma\xi_a,\xi_a}\geq C>7/8$
for all $\gamma\in\Gamma_a$.\qed

\section{Easy applications}

In this section, we use Ioana's theorem for one of its intended
purposes: to find many highly inequivalent actions.  The results
mentioned here are just meant to give the flavor of applications of
superrigidity; they by no means demonstrate the full power of the
theorem.  In the next section we will discuss the slightly more
interesting and difficult application to torsion-free abelian groups.
For further applications, see for instance \cite{ioana}, \cite{quasi},
\cite{sln}, and \cite{slocal}.

In searching for inequivalent actions, one might of course consider a
variety of inequivalence notions.  Here, we focus on just two of them:
orbit inequivalence and Borel incomparability.  Recall from the
introduction that $\Gamma\actson X$ and $\Lambda\actson Y$ are said to
be \emph{orbit equivalent} if there exists a measure-preserving and
orbit-preserving almost bijection from $X$ to $Y$.  Notice that this
notion depends only on the \emph{orbit equivalence relation} arising
from the two actions, and not on the actions themselves.  When this is
the case, we will often conflate the two, saying alternately that
certain actions are orbit equivalent or that certain equivalence
relations are ``orbit equivalent.''

Borel bireducibility is a purely set-theoretic notion with its origins
in logic.  The connection is that if $E$ is an equivalence relation on
a standard Borel space $X$, then we can think of $E$ as representing a
classification problem.  For instance, if $X$ happens to be a set of
codes for a family of structures, then studying the classification of
those structures amounts to studying the isomorphism equivalence
relation $E$ on $X$.  We refer the reader to \cite{gao} for a complete
introduction to the subject.

If $E$ and $F$ are equivalence relations on $X$ and $Y$, then $E$ is
said to be \emph{Borel reducible} to $F$ if and only if there exists a
Borel function $f\from X\into Y$ satisfying $x\mathrel{E}x'$ iff
$f(x)\mathrel{F}f(x')$.  We think of this as saying that the
classification problem for elements of $X$ up to $E$ is no more
complex than the classification problem for elements of $Y$ up to $F$.
Thus, if $E$ and $F$ are Borel bireducible (that is, there is a
reduction both ways), then they represent classification problems of
the same complexity.

It is elementary to see that neither of orbit equivalence and Borel
bireducibility implies the other.  For instance, given any
$\Gamma$-space $X$ one can form a disjoint union $X\sqcup X'$, where
$X'$ is a $\Gamma$-space of very high complexity which is declared to
be of measure $0$.  Conversely, if $X$ is an ergodic and hyperfinite
$\Gamma$-space, then it is known that it is bireducible with $X\sqcup
X$, but the two cannot be orbit equivalent.  It is even possible,
without much more difficulty, to find \emph{two} ergodic actions which
are bireducible but not orbit equivalent.

We are now ready to begin with the following direct consequence of
Ioana's theorem.  It was first established by Simon Thomas in
connection working on classification problem for torsion-free abelian
groups of finite rank.  His proof used Zimmer's superrigidity theorem
and some additional cocycle manipulation techniques; with Ioana's
theorem in hand, the proof will be much simpler.

\begin{cor}
  \label{cor_free}
  If $n\geq3$ is fixed and $p,q$ are primes such that $p\neq q$, then
  the actions of $\SL_n(\ZZ)$ on $\SL_n(\ZZ_p)$ and $\SL_n(\ZZ_q)$ are
  orbit inequivalent and Borel incomparable.
\end{cor}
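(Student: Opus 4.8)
The plan is to use Ioana's theorem (Theorem~\ref{thm_ioana}) to show that $\SL_n(\ZZ)\actson\SL_n(\ZZ_p)$ is superrigid after passing to a finite-index piece, and then to derive a contradiction from any hypothetical orbit equivalence or Borel reduction to $\SL_n(\ZZ)\actson\SL_n(\ZZ_q)$. First I would verify that the action of $\Gamma=\SL_n(\ZZ)$ on $X=\SL_n(\ZZ_p)$ (by left translation, say) is an ergodic profinite measure-preserving action: it is the inverse limit of the finite actions $\SL_n(\ZZ/p^k\ZZ)$, with the Haar measure as inverse limit of the uniform measures, and ergodicity follows from strong approximation (equivalently, from the fact that $\SL_n(\ZZ)$ surjects onto each $\SL_n(\ZZ/p^k\ZZ)$). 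Since $\Gamma$ has Property~(T) for $n\geq3$, Ioana's theorem applies: any cocycle into any countable $\Lambda$, restricted to some $\Gamma_a\actson\pi_k^{-1}(a)$, is cohomologous to a homomorphism.

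Next, suppose toward a contradiction that there is an orbit equivalence $f$ between $\SL_n(\ZZ)\actson\SL_n(\ZZ_p)$ and $\SL_n(\ZZ)\actson\SL_n(\ZZ_q)$. The target action is free (left translation on a group is free), so $f$ determines a cocycle $\alpha\from\Gamma\actson X\into\Gamma$. Apply Ioana's theorem to get $k$ and $a$ so that $\alpha$ restricted to $\Gamma_a\actson\pi_k^{-1}(a)$ is cohomologous to a homomorphism $\phi\from\Gamma_a\into\Gamma$. Here $\Gamma_a$ is a finite-index subgroup of $\SL_n(\ZZ)$, hence itself has Property~(T) and — crucially — is just-infinite-like enough that one invokes Margulis normal subgroup theorem: any homomorphism from $\Gamma_a$ to a countable group has either finite image or finite kernel. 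The finite-image case is incompatible with the fact that the cohomologous-to-$\phi$ cocycle must induce an (almost everywhere) injective-on-orbits map between the essentially free restricted actions, which forces $\phi$ to have finite kernel, so $\phi$ is a virtual isomorphism of $\Gamma_a$ with a finite-index subgroup of $\Gamma$. One then reads off that the restricted actions $\Gamma_a\actson\pi_k^{-1}(a)$ and its image under $f$ are \emph{isomorphic} as actions via $\phi$. But an isomorphism of measure-preserving actions transports the profinite structure on the $p$-side to a profinite structure of the same ``type'' on the $q$-side; comparing the two profinite completions $\SL_n(\ZZ_p)$ and $\SL_n(\ZZ_q)$ — whose finite quotients involve only the prime $p$ versus only the prime $q$ — yields a contradiction. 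This last step is the main obstacle: one must extract from an abstract isomorphism of restricted actions a genuine incompatibility of the inverse systems, which amounts to checking that a finite-index subgroup of $\SL_n(\ZZ_p)$ can never be abstractly isomorphic (as a $\Gamma_a$-space) to a finite-index subgroup of $\SL_n(\ZZ_q)$ when $p\neq q$, e.g.\ by comparing the orders of point stabilizers along the two towers.

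For the Borel incomparability half, the argument is parallel but takes place without a measure, and this is where the ``extra challenges in the Borel context'' alluded to in the text bite. A Borel reduction $f$ from the isomorphism-of-orbits relation of one action to the other need not be measure-preserving, nor even defined everywhere in a measure-theoretic sense, so one cannot directly feed it into Ioana's theorem. The plan is to restrict $f$ to an invariant Borel set that carries the measure class (using ergodicity of the $p$-side to argue $f$ is, after discarding a null set, a measure-equivalence onto its image, and that the image meets the $q$-side in a non-null invariant set), thereby reducing to the measure-theoretic situation already handled; alternatively, one invokes the stronger form of Ioana's theorem or a standard descriptive-set-theoretic argument that a Borel reduction between ergodic actions of this kind can be upgraded to an orbit equivalence between restrictions. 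Either way, once we are back in the measured category, the contradiction is obtained exactly as before. The delicate point — and the one I'd expect to spend the most care on — is justifying the passage from the purely Borel reduction to something Ioana's theorem can consume, since a priori the reduction could be supported off the measure entirely, which is precisely the phenomenon the disjoint-union example earlier in the section is warning about.
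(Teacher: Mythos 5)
Your overall outline matches the paper's strategy — apply Ioana's theorem to untwist the cocycle into a homomorphism $\phi$, upgrade to a virtual isomorphism, and finish with an arithmetic contradiction comparing the $p$- and $q$-adic congruence structure — but there are two concrete gaps in the middle that you would need to fill.

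First, knowing $\phi$ has finite kernel does not give you that $\phi(\Gamma_0)$ has \emph{finite index} in $\SL_n(\ZZ)$, yet you assert without argument that $\phi$ is ``a virtual isomorphism of $\Gamma_a$ with a finite-index subgroup of $\Gamma$.'' The paper supplies this via Margulis's superrigidity theorem: $\phi$ lifts to an automorphism of $\SL_n(\RR)$, so $\phi(\Gamma_0)$ is a lattice in $\SL_n(\RR)$, and a lattice contained in $\SL_n(\ZZ)$ must be commensurable with $\SL_n(\ZZ)$. Without finite index, the closure $\overline{\phi(\Gamma_0)}$ in $\SL_n(\ZZ_q)$ could be a thin closed subgroup, the coset $Y_0$ would not carry a canonical probability measure, and both the unique-ergodicity step and the final index-counting argument collapse.

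Second, your handling of the Borel case starts from a misconception: you claim a Borel reduction ``cannot directly be fed into Ioana's theorem'' because it need not be measure-preserving, and you therefore propose first upgrading $f$ to a measure-equivalence before applying the theorem. But Ioana's theorem requires the \emph{source action} $\Gamma\actson X$ to be measure-preserving, not the homomorphism of orbits; the cocycle $\alpha(\gamma,x)$ is well-defined for any Borel homomorphism of orbits into a free action, and Ioana's theorem applies to it verbatim. The paper's order of operations is the reverse of yours and it is the one that works: first apply Ioana's theorem to get $\phi$, then use Margulis normal-subgroup and superrigidity to make $\phi$ an embedding of finite index, then confine $\mathrm{im}(f)$ to a single $\overline{\phi(\Gamma_0)}$-coset $Y_0$ by ergodicity, and \emph{only then} deduce that $f$ is measure-preserving: since Haar is the unique $\phi(\Gamma_0)$-invariant probability measure on $Y_0$ and $f_*\mu$ is another one, they coincide. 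I do not see how your proposed ``upgrade first'' plan would produce measure-preservation without already knowing $\phi$; and your alternative (``invoke a standard descriptive-set-theoretic argument'') is too vague to assess. Your final number-theoretic step (comparing indices of point stabilizers or equivalently ergodic $\Delta$-components along the two congruence towers) is sketched at about the level of detail the paper itself uses, so that part is fine.
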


Here, $\ZZ_p$ denotes the ring of $p$-adic integers.  It is easy to
see that $\SL_n(\ZZ)\actson\SL_n(\ZZ_p)$ is a profinite action, being
the inverse limit of the actions $\SL_n(\ZZ)\actson\SL_n(\ZZ/p^i\ZZ)$
together with their natural system of projections.

\begin{proof}
  Let $p\neq q$ and suppose that $f$ is either a either an orbit
  equivalence or a Borel reduction from
  $\SL_n(\ZZ)\actson\SL_n(\ZZ_p)$ to $\SL_n(\ZZ)\actson\SL_n(\ZZ_q)$.
  We now apply Ioana's theorem together with the understanding of
  cocycles gained in the previous section.  The conclusion is: we can
  suppose without loss of generality that there exists a finite index
  subgroup $\Gamma_0\leq\SL_n(\ZZ)$, a $\bar\Gamma_0$-coset
  $X\subset\SL_n(\ZZ_p)$, and a homomorphism
  $\phi\from\Gamma_0\into\SL_n(\ZZ)$ which makes $f$ into an
  action-preserving map from $\Gamma_0\actson X$ into
  $\SL_n(\ZZ)\actson\SL_n(\ZZ_q)$.

  Now, in the measure-preserving case, it is not difficult to conclude
  that $f$ is a ``virtual isomorphism'' between the two actions.  We
  claim that this can be achieved even in the case that $f$ is just a
  Borel reduction.  First, we can assume that $\phi$ is an embedding.
  Indeed, by Margulis's theorem on normal subgroups
  \cite[Theorem~8.1.2]{zimmer}, either $\im(\phi)$ or $\ker(\phi)$ is
  finite.  If $\ker(\phi)$ is finite, then we can replace $\Gamma_0$
  by a finite index subgroup (and $X$ by a coset of the new
  $\bar\Gamma_0$) to suppose that $\phi$ is injective.  On the other
  hand, if $\im(\phi)$ is finite, then we can replace $\Gamma_0$ by a
  finite index subgroup to suppose that $\phi$ is trivial.  But this
  would mean that $f$ is $\Gamma_0$-invariant, and so by ergodicity of
  $\Gamma_0\actson X$, $f$ would send a conull set to a single point,
  contradicting that $f$ is countable-to-one.

  Second, $\phi(\Gamma_0)$ must be a finite index subgroup of
  $\SL_n(\ZZ)$.  Indeed, by Margulis's superrigidity theorem, $\phi$
  can be lifted to an isomorphism of $\SL_n(\RR)$, and it follows that
  $\phi(\Gamma_0)$ is a lattice of $\SL_n(\RR)$.  But then it is easy
  to see that any lattice which is contained in $\SL_n(\ZZ)$ must be
  commensurable with $\SL_n(\ZZ)$.

  Third, by the ergodicity of $\Gamma_0\actson X$, we can assume that
  $\im(f)$ is contained in a single $\overline{\phi(\Gamma_0)}$ coset
  $Y_0$.  And now because $\phi(\Gamma_0)$ preserves a unique measure
  on $Y_0$ (the Haar measure), and because $\phi(\Gamma_0)$ preserves
  $f_*(\mathrm{Haar})$, we actually conclude that $f$ is
  measure-preserving.  In summary, we have shown that $(\phi,f)$ is a
  measure and action-preserving isomorphism between $\Gamma_0\actson
  X_0$ and $\phi(\Gamma_0)\actson Y_0$, which establishes the claim.

  Finally, a short computation confirms the intuitive, algebraic fact
  that the existence of such a map is ruled out by the mismatch in
  primes between the left and right-hand side.  We give just a quick
  sketch; for a few more details see \cite[Section~6]{super}.  Now, it
  is well-known that there are constants $A_p$ such that for any
  $\Delta\leq\SL_n(\ZZ)$ of finite index, the index of $\bar\Delta$ in
  $\SL_n(\ZZ_p)$ divides $A_pp^r$ for some $r$.  It follows that if
  $\Delta\leq\Gamma_0$, then $X$ breaks up into some number $N$ of
  ergodic $\Delta$-sets with $N\mid A_pp^r$.  Since $(\phi,f)$ is a
  measure and action-preserving isomorphism, we also have that $Y$
  breaks up into $N$ ergodic $\phi(\Delta)$ sets, and hence $N\mid
  A_qq^s$ also.  But it is not difficult to choose $\Delta$ small
  enough to ensure that $N$ is large enough for this to be a
  contradiction.
\end{proof}

This argument can be easily generalized to give uncountably many
incomparable actions of $\SL_n(\ZZ)$.  Given an infinite set $S$ of
primes with increasing enumeration $S=\set{p_i}$, we can construct a
profinite $\SL_n(\ZZ)$-set
\[K_S=\lim_{\leftarrow}\SL_n(\ZZ/p_1\cdots p_i\ZZ)\;.
\]
It is not much more difficult to show (as Ioana does) that when
$\abs{S\mathrel{\triangle}S'}=\infty$, the actions $\SL_n(\ZZ)\actson
K_S$ and $\SL_n(\ZZ)\actson K_{S'}$ are orbit inequivalent.  In fact,
this shows that there are ``$E_0$ many'' orbit inequivalent profinite
actions of $\SL_n(\ZZ)$.  Of course, it is known from different
arguments (exposited in \cite[Theorem~17.1]{kechris-global}) that the
relation of orbit equivalence on the ergodic actions of $\SL_n(\ZZ)$
is very complex (for instance not Borel).  But the methods used here
give us more detailed information: we have an explicit family of
inequivalent actions, the actions are special (they are classical and
profinite), and what's more they are Borel incomparable.

So far, we have considered only free actions of $\SL_n(\ZZ)$.  But if
one just wants to use Ioana's theorem to find orbit inequivalent
actions, it is enough to consider actions which are just free almost
everywhere.  Here, a measure-preserving action $\Gamma\actson X$ is
said to be \emph{free almost everywhere} if the set
$\set{x\mid\gamma\neq1\rightarrow\gamma x\neq x}$ is conull (that is,
the set where $\Gamma$ acts freely is conull).

Unfortunately, in the purely Borel context it is not sufficient to
work with actions which are free almost everywhere, since in this case
we are not allowed to just delete a null set on the right-hand side.
The next result shows how to get around this difficulty.  Once again,
it was originally obtained by Simon Thomas using Zimmer's
superrigidity theorem.

\begin{cor}
  \label{cor_nonfree}
  If $n\geq3$ is fixed and $p,q$ are primes with $p\neq q$, then the
  actions of $\SL_n(\ZZ)$ on $\PP(\QQ_p^n)$ and $\PP(\QQ_q^n)$ are are
  orbit inequivalent and Borel incomparable.
\end{cor}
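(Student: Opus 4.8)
The plan is to reduce this to Corollary~\ref{cor_free} by relating the actions on the projective spaces $\PP(\QQ_p^n)$ to the free profinite actions on $\SL_n(\ZZ_p)$. The point of passing to $\PP(\QQ_p^n)$ rather than working directly with $\SL_n(\ZZ_p)$ is precisely the difficulty flagged in the preceding paragraph: in the purely Borel setting one cannot discard a null set, so one needs an action that is \emph{everywhere} free, or at least one whose non-free part can be handled on the nose. I would first observe that $\SL_n(\ZZ)$ acts on $\PP(\QQ_p^n)$ with the orbit of the ``integral'' lines forming a conull, invariant, profinite piece: concretely, the stabilizer of a generic line is trivial (this uses $n\geq 3$ and the irrationality of a generic $p$-adic line), while the action restricted to this conull set is, up to finite-to-one identifications, the profinite action $\SL_n(\ZZ)\actson\SL_n(\ZZ_p)/B$ for a suitable parabolic, hence still profinite and ergodic.

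Next I would run the argument of Corollary~\ref{cor_free} verbatim on this conull profinite piece. Suppose $f$ is an orbit equivalence or Borel reduction from $\SL_n(\ZZ)\actson\PP(\QQ_p^n)$ to $\SL_n(\ZZ)\actson\PP(\QQ_q^n)$. Apply Ioana's theorem (Theorem~\ref{thm_ioana}) together with Theorem~\ref{thm_untwist} to the associated cocycle, obtaining a finite index subgroup $\Gamma_0\leq\SL_n(\ZZ)$, an invariant profinite piece $X$ on the left, and a homomorphism $\phi\from\Gamma_0\into\SL_n(\ZZ)$ making $f$ action-preserving on $X$. As before, Margulis's normal subgroups theorem lets us assume $\phi$ is injective, Margulis superrigidity forces $\phi(\Gamma_0)$ to be commensurable with $\SL_n(\ZZ)$, and ergodicity plus uniqueness of Haar measure upgrades $f$ to a measure- and action-preserving virtual isomorphism onto a conull invariant piece $Y$ of $\PP(\QQ_q^n)$.

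The final step is the arithmetic contradiction, and this is where the projective setting requires a little more care than in Corollary~\ref{cor_free}: one must count the number of ergodic components into which $\PP(\QQ_p^n)$ breaks up under a small finite-index $\Delta\leq\Gamma_0$. Using that the index of $\bar\Delta$ in $\SL_n(\ZZ_p)$ divides $A_p p^r$, and that $\PP(\QQ_p^n)$ is (conull-equivariantly) a quotient of $\SL_n(\ZZ_p)$ by a parabolic, the number $N$ of ergodic $\Delta$-pieces still divides $A_p p^r$ for some $r$; transporting through the virtual isomorphism $(\phi,f)$ forces $N\mid A_q q^s$ as well. Choosing $\Delta$ deep enough in a congruence tower over $p$ makes $N$ a large power of $p$, contradicting $p\neq q$. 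I expect the main obstacle to be the bookkeeping in the first step: verifying cleanly, in the Borel category and without deleting null sets, that the action on $\PP(\QQ_p^n)$ is ``profinite on a conull invariant set with trivial generic stabilizer,'' so that Ioana's theorem applies and so that the ergodic-component count really does descend from $\SL_n(\ZZ_p)$ to the projective space. Once that structural picture is in place, the rest is a routine repetition of the proof of Corollary~\ref{cor_free}.
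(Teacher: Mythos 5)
Your proposal handles the orbit-equivalence case correctly — restrict the range to the conull free part of $\PP(\QQ_q^n)$ and run the arithmetic from Corollary~\ref{cor_free}; this is exactly what the paper does. But it misses the genuinely new difficulty in the Borel-reduction case, which is on the \emph{codomain} side, not the domain side. You focus on arranging that the action on $\PP(\QQ_p^n)$ is profinite-and-free on a conull set, but freeness of the source is not what Ioana's theorem needs (it needs the source to be profinite, which it is); what is needed is freeness of the \emph{target} action, since the cocycle $\alpha(\gamma,x)$ is only well-defined when the element of $\SL_n(\ZZ)$ carrying $f(x)$ to $f(\gamma x)$ is unique. In the Borel setting one cannot simply restrict $f$ to land in the free part of $\PP(\QQ_q^n)$: by ergodicity of the source, the image lands either conull in the free part or conull in the non-free part, and your proposal has nothing to say in the latter case. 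That case is where all the work is.

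The paper's route through that case is quite different from what you sketch, and the arithmetic contradiction you invoke does not apply there. When $f(x)$ always has a nontrivial stabilizer, each $f(x)$ lies in a nontrivial eigenspace of some $\gamma\in\SL_n(\ZZ)$; one takes $V_x$ to be the minimal $\bar\QQ$-defined subspace containing $f(x)$, notes there are only countably many candidates, uses ergodicity to fix $V_x = V$, and then shows the induced group $H\leq\GL(V)$ acts \emph{freely} on $\PP(V)$ so that Ioana's theorem finally applies with target $H\actson\PP(V)$. The contradiction is then not a $p$-vs-$q$ index count at all — it is a dimension count: one gets an embedding of a finite-index $\Gamma_0\leq\SL_n(\ZZ)$ into an algebraic $\bar\QQ$-group of dimension at most $\dim\GL(V) < n^2-1$, which Margulis superrigidity forbids. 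So the gap in your proposal is concrete: you have no way to form the cocycle when the reduction hits the non-free part, and even granting one, the ergodic-component count does not carry over to that situation.
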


Here, $\PP(\QQ_p^n)$ denotes projective space of lines through
$\QQ_p^n$.  Since $\PP(\QQ_p^n)$ is a transitive $\SL_n(\ZZ_p)$-space,
this result is quite similar to the last one.  We note also that while
$\SL_n(\ZZ)$ does not act freely on $\PP(\QQ_p^n)$, it does act freely
on a conull subset \cite[Lemma~6.2]{super}.

\begin{proof}
  First suppose that $f\from\PP(\QQ_p^n)\into\PP(\QQ_q^n)$ is a
  measure-preserving and orbit-preserving map.  Then we can simply
  restrict the domain of $f$ to assume that it takes values in the
  part of $\PP(\QQ_q^n)$ where $\SL_n(\ZZ)$ acts freely.  Afterwards,
  we can obtain a contradiction using essentially the same
  combinatorial argument as in the proof of Corollary~\ref{cor_free}.

  The proof in the case of Borel reducibility requires an extra step.
  Namely, we cannot be sure that $f$ sends a conull set into the part
  of $\PP(\QQ_q^n)$ where $\SL_n(\ZZ)$ acts freely.  However, if it
  does not, then by the ergodicity of $\SL_n(\ZZ)\actson\PP(\QQ_p^n)$
  we can assume that $f$ sends a conull set into the part of
  $\PP(\QQ_q^n)$ where $\SL_n(\ZZ)$ acts \emph{non}-freely.  Our aim
  will be to show that this assumption leads to a contradiction.

  Proceeding, let us assume that there exists a conull subset
  $X\subset\PP(\QQ_p^n)$ such that for all $x\in X$, there exists
  $\gamma\neq1$ such that $\gamma f(x)=f(x)$.  Then for all $x\in X$,
  $f(x)$ lies inside a nontrivial eigenspace of some element of
  $\SL_n(\ZZ)$.  Hence, if we let $V_x$ denote the minimal subspace of
  $\QQ_q^n$ which is defined over $\bar\QQ$ such that $f(x)\subset
  V_x$, then $V_x$ is necessarily nontrivial.

  Note that since $\bar\QQ$ is countable, there are only countably
  many possibilities for $V_x$.  Hence, there exists a non-null subset
  $X'$ of $X$ and a fixed subspace $V$ of $\QQ_q^n$ such that for all
  $x\in X'$, we have $V_x=V$.  By the ergodicity of $\SL_n(\ZZ)\actson
  X$, the set $X''=\SL_n(\ZZ)\cdot X'$ is conull, and it follows that
  we can adjust $f$ to assume that for all $x\in X''$ we have $V_x=V$.
  (More precisely, replace $f(x)$ by $f'(x)=f(\gamma x)$, where
  $\gamma$ is the first element of $\SL_n(\ZZ)$ such that $\gamma x\in
  X''$.)

  Now, let $H\leq\GL(V)$ denote the group of projective linear
  transformations induced on $V$ by $\SL_n(\ZZ)_{\set{V}}$.  It is an
  easy exercise, using the minimality of $V$, to check that $H$ acts
  freely on $\PP(V)$, and that $f$ is a homomorphism of orbits from
  $\SL_n(\ZZ)\actson X''$ into $H\actson\PP(V)$.  Admitting this, we
  can finally apply Ioana's theorem to suppose that there exists a
  finite index subgroup $\Gamma_0\leq\Gamma$ and a nontrivial
  homomorphism $\phi\from\Gamma_0\into H$.  As in the proof of
  Corollary~\ref{cor_free}, we can suppose that $\phi$ is an
  embedding.  We thus get a contradiction from the next result, below.
\end{proof}

\begin{thm}
  If $\Gamma_0\leq\SL_n(\ZZ)$ is a subgroup of finite index and
  $\bm{G}$ is an algebraic $\bar\QQ$-group with $\dim(\bm{G})<n^2-1$,
  then $\Gamma_0$ does not embed into $\bm{G}(\bar\QQ)$.\qed
\end{thm}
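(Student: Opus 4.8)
I would argue by contradiction, using Margulis's superrigidity theorem to force $\dim\bm G\ge n^2-1=\dim\SL_n$. So suppose $\phi\colon\Gamma_0\to\bm G(\bar\QQ)$ is injective with $\dim\bm G<n^2-1$. First I would make a few harmless reductions. Replacing $\bm G$ by the Zariski closure of $\phi(\Gamma_0)$, and then $\Gamma_0$ by the preimage $\phi^{-1}(\bm G^\circ)$ of the identity component --- which is again of finite index in $\SL_n(\ZZ)$, and on which $\phi$ stays injective --- we may assume $\bm G$ is connected and $\phi(\Gamma_0)$ is Zariski dense; the dimension has not gone up. Next let $\bm R$ be the radical of $\bm G$ and pass to the semisimple quotient $\bm S=\bm G/\bm R$, composing $\phi$ with the projection to get $\bar\phi\colon\Gamma_0\to\bm S(\bar\QQ)$, which still has Zariski-dense image and satisfies $\dim\bm S\le\dim\bm G<n^2-1$. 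The kernel of $\bar\phi$ lands in the solvable group $\bm R(\bar\QQ)$, so it is a solvable normal subgroup of $\Gamma_0$; by Margulis's normal subgroup theorem \cite[Theorem~8.1.2]{zimmer} it is finite or of finite index, and it cannot be of finite index since $\Gamma_0$ (having Property~(T) and being infinite) is not virtually solvable. Hence $\ker\bar\phi$ is finite and $\bar\phi$ has infinite image.

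Now I would apply superrigidity. Since $n\ge3$, $\Gamma_0$ is an irreducible lattice in the higher-rank simple Lie group $\SL_n(\RR)$, and $\bar\phi$ is a homomorphism with Zariski-dense image into $\bm S(\bar\QQ)\subseteq\bm S(\CC)$. Margulis's superrigidity theorem then implies that, after replacing $\Gamma_0$ by a finite-index subgroup if necessary, $\bar\phi$ agrees on $\Gamma_0$ with the restriction of a homomorphism of $\CC$-algebraic groups $\Psi\colon\SL_n\to\bm S$; here one uses that the entries of elements of $\Gamma_0\le\SL_n(\ZZ)$ are algebraic integers, hence fixed by every field automorphism, which collapses the Galois-conjugate possibilities allowed by the theorem in the bounded-image case. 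Since $\bar\phi(\Gamma_0)$ is infinite, $\Psi$ is nontrivial; and since $\SL_n$ is simple, $\ker\Psi$ is central, hence finite, so $\Psi$ is an isogeny onto its (Zariski-closed) image. Therefore $\dim\bm S\ge\dim\Psi(\SL_n)=\dim\SL_n=n^2-1$, contradicting $\dim\bm S<n^2-1$.

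The step I expect to be the main obstacle is the invocation of superrigidity in exactly this form --- in particular its behaviour when $\bar\phi(\Gamma_0)$ is relatively compact in $\bm S(\CC)$, which is the delicate half of Margulis's theorem (the half responsible for arithmeticity). One can instead dispose of the bounded case by hand: a bounded image would exhibit $\Gamma_0$ inside a compact Lie group, hence inside some unitary group $\mathrm{U}(m)$, contradicting the standard fact that a higher-rank arithmetic lattice has no finite-dimensional unitary representation with infinite image. Once superrigidity is available in the right form, the remaining ingredients --- the normal subgroup theorem, the structure theory of linear algebraic groups, and the dimension bookkeeping --- are routine.
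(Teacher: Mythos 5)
Your outline matches the paper's own (very brief) sketch: both proceed by applying Margulis superrigidity to force $\dim\bm G\ge\dim\SL_n=n^2-1$, and the paper explicitly defers the details --- ``a little extra work is needed to handle the case of a $\bar\QQ$-group'' --- to \cite[Theorem~4.4]{slocal}. Your reductions (pass to the Zariski closure, then to the identity component, then to the semisimple quotient, then invoke the normal subgroup theorem to make the map virtually injective) are the right ones, and the final dimension bookkeeping is correct.

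Two cautionary remarks on the parts you flagged as delicate. First, the assertion that the entries of elements of $\Gamma_0$ are ``algebraic integers, hence fixed by every field automorphism'' is off: algebraic integers are not Galois-fixed in general. What is true, and what you want, is that the entries lie in $\ZZ$, so that the \emph{source} of the homomorphism is rigid under Galois twisting; the twists that matter are those of the \emph{image}, i.e.\ of the embedding $\bar\QQ\hookrightarrow\CC$ (equivalently, the various places of the number field generated by the matrix entries of $\phi(\Gamma_0)$). Second, your disposal of the bounded case by citing ``a higher-rank arithmetic lattice has no finite-dimensional unitary representation with infinite image'' is not really a shortcut: that fact is itself proved by running superrigidity over all the archimedean and non-archimedean completions of the trace field and using that a subgroup which is bounded at every place must be finite. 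That argument is precisely the ``extra work'' the paper's citation to \cite{slocal} is hiding, so you should either carry it out (choose a place $v$ of the trace field where $\phi_v$ is unbounded, rule out non-archimedean $v$ by total disconnectedness, and extend at an archimedean $v$) or cite a source that states it in the required form, rather than treat it as free.
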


The idea of the proof is to apply Margulis's superrigidity theorem.
That is, one wishes to conclude that such an embedding lifts to some
kind of rational map $\SL_n(\RR)\into\bm{G}$, a clear dimension
contradiction.  However, a little extra work is needed to handle the
case of a $\bar\QQ$-group on the right-hand side (see
\cite[Theorem~4.4]{slocal}).

\section{Torsion-free abelian groups of finite rank}

The torsion-free abelian groups of rank $1$ were classified by Baer in
1937.  The next year, Kurosh and Malcev expanded on his methods to
give classifications for the torsion-free abelian groups of ranks $2$
and higher.  Their solution, however, was considered inadequate
because the invariants they provided were no easier to distinguish
than the groups themselves.

In 1998, Hjorth proved, using methods from the study of Borel
equivalence relations, that the classification problem for rank $2$
torsion-free abelian groups is \emph{strictly harder} than that for
rank $1$ (see \cite{hjorth-classification}).  However, his work did
not answer the question of whether the classification problem for rank
$2$ groups is as complex as for all finite ranks, or whether there is
more complexity to be found by looking at ranks $3$ and higher.

Let $R(n)$ denote the space of torsion-free abelian groups of rank
exactly $n$, that is, the set of full-rank subgroups of $\QQ^n$.  Let
$\oiso_n$ denote the isomorphism relation on $R(n)$.  In this section
we shall give a concise and essentially self-contained proof of
Thomas's theorem:

\begin{thm}[Thomas, \cite{torsionfree}]
  \label{thm_thomas}
  For $n\geq2$, we have that $\oiso_n$ lies properly below
  $\oiso_{n+1}$ in the Borel reducibility order.
\end{thm}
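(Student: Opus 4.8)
The plan is to treat the two halves of the statement --- $\oiso_n\leq_B\oiso_{n+1}$ and $\oiso_{n+1}\not\leq_B\oiso_n$ --- separately; only the second uses Ioana's theorem. For the first, the plan is to use the Borel map $R(n)\to R(n+1)$ given by $A\mapsto A\oplus\QQ$, the sum formed inside $\QQ^{n+1}=\QQ^n\oplus\QQ$. It clearly lands in $R(n+1)$ and is Borel in $A$, and it is a reduction because the maximal divisible subgroup is characteristic: writing $A=D(A)\oplus R_A$ with $R_A\cong A/D(A)$ reduced, one has $A\oplus\QQ\cong(D(A)\oplus\QQ)\oplus R_A$ with divisible part $D(A)\oplus\QQ$ and reduced quotient $R_A$, so $A\oplus\QQ\cong B\oplus\QQ$ forces $D(A)$ and $D(B)$ to have equal rank and $R_A\cong R_B$, whence $A\cong B$; the converse is trivial.

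For $\oiso_{n+1}\not\leq_B\oiso_n$, fix a prime $p$ and to each line $w\in\PP(\QQ_p^{n+1})$ assign the rank-$(n+1)$ group
\[A_w=\ZZ[1/p]^{n+1}\cap(\ZZ_p^{n+1}+w)\leq\QQ^{n+1},
\]
the intersection taken in $\QQ_p^{n+1}$ (with $\ZZ[1/p]^{n+1}$ embedded via $\ZZ[1/p]\hookrightarrow\QQ_p$), so that $\ZZ^{n+1}\leq A_w$ with $A_w/\ZZ^{n+1}$ quasicyclic and $w\mapsto A_w$ is Borel. The only fact about this assignment that we need is the trivial one: if $\gamma\in\SL_{n+1}(\ZZ)$ carries the line $w$ to $w'$, then --- $\gamma$ preserving each of $\ZZ^{n+1}$, $\ZZ_p^{n+1}$ and $\ZZ[1/p]^{n+1}$ --- it carries $A_w$ isomorphically onto $A_{w'}$. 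Let $E$ be the relation ``$A_w\cong A_{w'}$'' on $\PP(\QQ_p^{n+1})$; since any such isomorphism is the restriction of an element of the countable group $\GL_{n+1}(\ZZ[1/p])$ (it must preserve $\ZZ_q^{n+1}$ for every $q\neq p$), $E$ is a countable Borel equivalence relation, and $w\mapsto A_w$ tautologically witnesses $E\leq_B\oiso_{n+1}$.

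Now suppose toward a contradiction that $\oiso_{n+1}\leq_B\oiso_n$; then $E\leq_B\oiso_n$, say via a Borel $g\from\PP(\QQ_p^{n+1})\to R(n)$. Since $A\cong B$ in $R(n)$ exactly when $hA=B$ for some $h\in\GL_n(\QQ)$, the relation $\oiso_n$ is the orbit equivalence relation of the countable group $\GL_n(\QQ)$ acting on $R(n)$; and by the trivial fact above, $g$ is a homomorphism of orbits from $\SL_{n+1}(\ZZ)\actson\PP(\QQ_p^{n+1})$ --- an ergodic, measure-preserving, profinite action of a Property~(T) group, of exactly the kind treated in Corollary~\ref{cor_nonfree} --- into $\GL_n(\QQ)\actson R(n)$, because $w,w'$ in a common $\SL_{n+1}(\ZZ)$-orbit $\Rightarrow A_w\cong A_{w'}\Rightarrow g(w)\cong g(w')$. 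To form a cocycle we restrict the target to the Borel set $R(n)^{*}$ of groups with automorphism group $\{\pm1\}$, on which $\GL_n(\QQ)/\{\pm1\}$ acts freely; the alternative, that $g$ sends a conull set into the part of $R(n)$ with larger automorphism group, is handled by the countable-exhaustion and descent maneuver used in the Borel half of Corollary~\ref{cor_nonfree}, which replaces it by a free action of a subgroup of an algebraic group of dimension at most $n^{2}$ and leaves the rest of the argument intact. Applying Ioana's theorem (Theorem~\ref{thm_ioana}) to the resulting cocycle $\SL_{n+1}(\ZZ)\actson\PP(\QQ_p^{n+1})\into\GL_n(\QQ)/\{\pm1\}$ produces a finite-index subgroup $\Gamma_0\leq\SL_{n+1}(\ZZ)$ over whose action on a single component $g$ is, up to the usual equivalence of homomorphisms of orbits, action-preserving for a homomorphism $\phi\from\Gamma_0\to\GL_n(\QQ)/\{\pm1\}$. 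By Margulis's normal subgroup theorem we may, after passing to a further finite-index subgroup, take $\phi$ injective: its kernel is finite and therefore central, so harmless; and were $\im(\phi)$ finite, $g$ would take values in a single $\oiso_n$-class on a non-null set, impossible since $E$ has uncountably many classes on the component (the relevant point-stabilizer acts ergodically and without atoms, and $E$-classes are countable). But now a finite-index subgroup of $\SL_{n+1}(\ZZ)$ embeds into $\GL_n(\bar\QQ)/\{\pm1\}$, an algebraic $\bar\QQ$-group of dimension $n^{2}<(n+1)^{2}-1$, contradicting the theorem at the end of the preceding section. Hence $\oiso_{n+1}\not\leq_B\oiso_n$, which with the first part is the theorem.

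The real work, I expect, lies not in the invocation of Ioana's theorem --- which just reruns the endgame of Corollary~\ref{cor_nonfree} against the dimension bound of the previous section --- but in the bookkeeping around it: choosing the coding $w\mapsto A_w$ so that the $p$-adic argument above goes through, and, more seriously, justifying in the purely Borel setting the reduction to the locus where $\GL_n(\QQ)$ acts virtually freely, which needs enough control over the automorphism groups of finite-rank torsion-free abelian groups to partition the exceptional part into countably many invariant pieces of bounded dimension (cf.\ \cite{torsionfree}). It is a convenient feature of this approach that the \emph{hard} direction of the coding --- recovering the $\SL_{n+1}(\ZZ)$-orbit of $w$ from the isomorphism type of $A_w$ --- never has to be proved.
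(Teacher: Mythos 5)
Your proof divides into two halves. The easy half, $\oiso_n\leq_B\oiso_{n+1}$ via $A\mapsto A\oplus\QQ$, is correct (cancellation through the divisible part works for finite-rank torsion-free groups) and is something the paper treats as routine. The overall skeleton of the hard half is also right, and is the paper's: compose a hypothetical reduction $\oiso_{n+1}\leq_B\oiso_n$ with a Borel map $\PP(\QQ_p^{n+1})\to R(n+1)$ to get a countable-to-one homomorphism of orbits from the profinite action $\SL_{n+1}(\ZZ)\actson\PP(\QQ_p^{n+1})$ into $\GL_n(\QQ)\actson R(n)$, apply Ioana to extract a homomorphism $\phi\from\Gamma_0\to\Lambda$, upgrade it to an embedding via Margulis's normal subgroup theorem, and then derive a dimension contradiction via Margulis superrigidity.

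However, there is a genuine gap at precisely the spot you flag as ``the real work.'' You propose to deal with non-freeness of $\GL_n(\QQ)\actson R(n)$ by restricting to the locus where $\Aut(A)=\{\pm1\}$ and handling the complement ``by the countable-exhaustion and descent maneuver used in the Borel half of Corollary~\ref{cor_nonfree}.'' That maneuver does not transfer. In Corollary~\ref{cor_nonfree} the countability came for free, because the objects being fixed across a conull set were linear subspaces of $\QQ_q^n$ defined over the countable field $\bar\QQ$. Here the objects would be the groups $\Aut(f(x))\leq\GL_n(\QQ)$, and there are uncountably many subgroups of $\GL_n(\QQ)$ that can arise as automorphism groups of finite-rank torsion-free abelian groups; nothing forces a conull set of $x$ to share one, nor is there a countable parametrization to exploit. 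This is exactly what the paper calls Claim~I and explicitly rejects: ``Claim~I seems to be as difficult to prove as Theorem~\ref{thm_thomas} itself.'' (It is in fact true, but only by a later, harder argument that presupposes the theorem.)

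The paper's actual fix, which is the one idea missing from your proof, is to coarsen from isomorphism to \emph{quasi-isomorphism} $\oqiso_n$ and to fix, not the automorphism group, but the quasi-endomorphism ring $\QEnd(f(x))$. The decisive point is that $\QEnd(A)$ is a $\QQ$-subalgebra of $\mathrm{M}_{n\times n}(\QQ)$, hence determined by a finite $\QQ$-basis, hence there are only \emph{countably} many possibilities. That is what makes the ``fix an invariant on a conull set'' step legitimate, after which Proposition~\ref{prop_free} hands you a genuinely free action of $N_{\GL_n(\QQ)}(K)/K^\times$ and the rest of your endgame (Ioana, Margulis normal subgroup theorem, Margulis superrigidity, $\dim\leq n^2<(n+1)^2-1$) goes through exactly as you wrote it. Without the passage to quasi-isomorphism, the proof stalls at the freeness step.

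One smaller remark: your explicit coding $w\mapsto A_w=\ZZ[1/p]^{n+1}\cap(\ZZ_p^{n+1}+w)$ together with the pullback relation $E$ is a clean variant of the paper's Kurosh--Malcev map (Proposition~\ref{prop_projred}), and the observation that $E\leq_B\oiso_{n+1}$ is tautological is a nice economy. But it buys you nothing toward the actual obstruction, which lies entirely on the $R(n)$ side.
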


Thomas's original argument used Zimmer's superrigidity theorem.  In
this presentation, we have essentially copied his argument verbatim,
with a few simplifications stemming from the use of Ioana's theorem
instead of Zimmer's theorem.

The first connection between is this result and the results of the
last section is that for $A,B\in R(n)$, we have $A\iso B$ iff there
exists $g\in\GL_n(\QQ)$ such that $B=g(A)$.  Hence, the isomorphism
relation $\oiso_n$ is given by a natural action of the linear group
$\GL_n(\QQ)$.  Unfortunately, even restricting to just the action of
$\SL_n(\ZZ)$, the space $R(n)$ is nothing like a profinite space.

\subsection*{The Kurosh--Malcev invariants}

Although I have said that the Kurosh--Malcev invariants do not
adequately classify the torsion-free abelian groups of finite rank, we
will get around our difficulties by working with the Kurosh--Malcev
invariants rather than with the original space $R(n)$.  The following
is the key result concerning the invariants; see
\cite[Chapter~93]{fuchs} for a full account.

\begin{thm}[Kurosh, Malcev]
  \label{thm_km}
  The map $A\mapsto A_p=\ZZ_p\otimes A$ is a $\GL_n(\QQ)$-preserving
  bijection between the (full rank) $p$-local subgroups of $\QQ^n$
  and the (full rank) $\ZZ_p$-submodules of $\QQ_p^n$.  The inverse
  map is given by $A_p\mapsto A=A_p\cap\QQ^n$.
\end{thm}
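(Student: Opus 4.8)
The plan is to verify directly that $A\mapsto A_p=\ZZ_p\otimes A$ and $M\mapsto M\cap\QQ^n$ are well defined between the two classes of modules and are mutually inverse; the only real subtlety is that these modules need not be finitely generated, so I would reduce to finitely generated data at each step. Write $\ZZ_{(p)}$ for the localization of $\ZZ$ at $p$, so that the $p$-local subgroups of $\QQ^n$ are precisely its $\ZZ_{(p)}$-submodules. Since $\ZZ_p$ is flat over $\ZZ$ and $\ZZ_p\otimes_\ZZ\QQ=\QQ_p$, for a full-rank $\ZZ_{(p)}$-submodule $A\subseteq\QQ^n$ tensoring the inclusion $A\hookrightarrow\QQ^n$ with $\ZZ_p$ gives an embedding $A_p=\ZZ_p\otimes A\hookrightarrow\QQ_p^n$ whose image is the $\ZZ_p$-span of $A$; tensoring once more with $\QQ_p$ shows $A_p$ again has full rank. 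Equivariance under $\GL_n(\QQ)$ is automatic, since a rational matrix is $\ZZ_p$-linear and hence $g(\ZZ_p\cdot A)=\ZZ_p\cdot g(A)$.

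First I would check that $A_p\cap\QQ^n=A$; only the inclusion $\subseteq$ needs an argument. A vector $v$ in $A_p\cap\QQ^n$ lies in the $\ZZ_p$-span of finitely many elements of $A$, hence in the $\ZZ_p$-span of a finitely generated---and therefore free---$\ZZ_{(p)}$-submodule $A'=\ZZ_{(p)}f_1\oplus\cdots\oplus\ZZ_{(p)}f_r$ of $A$. Extending $f_1,\dots,f_r$ to a basis of $\QQ^n$, which is simultaneously a $\QQ_p$-basis of $\QQ_p^n$, and expressing $v$ in these coordinates, the two conditions $v\in\ZZ_pf_1+\cdots+\ZZ_pf_r$ and $v\in\QQ^n$ force the coordinates of $v$ into $\ZZ_p\cap\QQ=\ZZ_{(p)}$, so $v\in A'\subseteq A$. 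In particular the forward map is injective.

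The hard part will be surjectivity of the forward map: given an arbitrary full-rank $\ZZ_p$-submodule $M\subseteq\QQ_p^n$, I must produce an $A$ with $\ZZ_p\otimes A=M$, and although $A=M\cap\QQ^n$ is the only candidate, it is not clear a priori that it even has full rank, or that its $\ZZ_p$-span is all of $M$, since $M$ may be infinitely generated. The key observation I would use is that $M$ contains a \emph{rational} lattice: choosing a $\ZZ_p$-basis of $\QQ_p^n$ lying in $M$ produces a lattice $L\subseteq M$, and since any two lattices in $\QQ_p^n$ are commensurable, $p^a\ZZ_p^n\subseteq L\subseteq M$ for some $a\geq0$. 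Hence $p^a\ZZ^n\subseteq A$, so $A$ has full rank, and $A$ is $p$-local because $M$, being a $\ZZ_p$-module, is divisible by every prime $q\neq p$. To see $M\subseteq\ZZ_p\cdot A$, take $v\in M$ and write $v=p^{-j}w$ with $w\in\ZZ_p^n$, $j\geq0$; choosing $w'\in\ZZ^n$ with $w\equiv w'\pmod{p^{j+a}\ZZ_p^n}$, which is possible since $\ZZ^n/p^{j+a}\ZZ^n\cong\ZZ_p^n/p^{j+a}\ZZ_p^n$, the rational vector $v'=p^{-j}w'$ satisfies $v-v'\in p^a\ZZ_p^n\subseteq M$. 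Thus $v'\in M\cap\QQ^n=A$ and $v=v'+(v-v')\in A+p^a\ZZ_p^n\subseteq\ZZ_p\cdot A$. Since the composite in the other direction is the identity by the previous paragraph, the two maps are mutually inverse bijections.

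Finally I would remark, for the benefit of the newcomer, that this last step can alternatively be organized by writing $M$ as the increasing union of the lattices $M_j=\set{v\in M\mid p^jv\in L}$ and treating each finite quotient $M_j/p^a\ZZ_p^n$ in turn; but the $p$-adic approximation above makes that detour unnecessary.
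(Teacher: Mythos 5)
The paper does not prove this theorem; it states it and refers the reader to Fuchs, so there is no ``paper's route'' to compare against. Your proof is correct and self-contained. A few small points confirming the soundness of the steps: the identification of the $p$-local subgroups with $\ZZ_{(p)}$-submodules is exactly right; flatness of $\ZZ_p$ over $\ZZ$ and the identity $\ZZ_p\otimes_\ZZ\QQ=\QQ_p$ justify the embedding $A_p\hookrightarrow\QQ_p^n$ and the full-rank claim; and $\ZZ_p\cap\QQ=\ZZ_{(p)}$ is what makes the coordinate argument for $A_p\cap\QQ^n\subseteq A$ close up (the extension of $f_1,\dots,f_r$ to a $\QQ$-basis is legitimate because $\ZZ_{(p)}$-linear independence implies $\QQ$-linear independence). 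For surjectivity, your use of commensurability of $\ZZ_p$-lattices to pin $p^a\ZZ_p^n\subseteq M$ and the $p$-adic approximation $w\equiv w'\pmod{p^{j+a}}$ to produce the rational representative $v'\in M\cap\QQ^n$ is exactly the right device and deals cleanly with the possible non-finite-generation of $M$. The observation that the same inclusion $p^a\ZZ_p^n\subseteq M$ both gives $A$ full rank and controls the error $v-v'$ is the efficient core of the argument. The one phrase that could mislead a reader is ``$M$ contains a \emph{rational} lattice'' before you have produced $p^a\ZZ^n$; the lattice $L$ you first exhibit is merely a $\ZZ_p$-lattice, and it is only after invoking commensurability that you land on a lattice with a rational basis. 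That is a wording issue, not a gap.
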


Here, a subgroup of $\QQ^n$ is said to be \emph{$p$-local} if it is
infinitely $q$-divisible for each prime $q\neq p$.  Kurosh and Malcev
proved that a subgroup $A\leq\QQ^n$ is determined by the sequence
$(A_p)$; this sequence is said to be the Kurosh--Malcev invariant
corresponding to $A$.  It follows of course that $A$ is determined up
to isomorphism by the orbit of $(A_p)$ under the coordinatewise action
of $\GL_n(\QQ)$.  (It is now easy to see why these invariants serve as
a poor classification: such orbits can be quite complex.)  All that we
shall need from this classification is the following corollary.

\begin{prop}
  \label{prop_projred}
  There exists a Borel reduction from $\GL_n(\QQ)\actson\PP(\QQ_p^n)$
  to $\oiso_n$.
\end{prop}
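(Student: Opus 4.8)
The plan is to combine the Kurosh--Malcev classification with a single explicit construction sending lines to $\ZZ_p$-submodules. First, by Theorem~\ref{thm_km} the assignment $A\mapsto A_p$ is a $\GL_n(\QQ)$-equivariant Borel bijection between the full-rank $p$-local subgroups of $\QQ^n$ and the full-rank $\ZZ_p$-submodules of $\QQ_p^n$. Since the $p$-local subgroups form a Borel subset of $R(n)$, the relation $\oiso_n$ restricted to that subset is Borel isomorphic to the orbit equivalence relation of $\GL_n(\QQ)$ acting on the space $\mathcal{M}$ of full-rank $\ZZ_p$-submodules of $\QQ_p^n$. So it suffices to build a Borel reduction from $\GL_n(\QQ)\actson\PP(\QQ_p^n)$ to $\GL_n(\QQ)\actson\mathcal{M}$, and composing it with the (Borel) inverse Kurosh--Malcev map $M\mapsto M\cap\QQ^n$ then finishes the job.

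For the reduction itself, I would send a line $L=\QQ_p v$, with $v$ chosen primitive in $\ZZ_p^n$, to
\[
  M_L \;=\; \set{x\in\QQ_p^n : \seq{v,x}\in\ZZ_p},
\]
where $\seq{\cdot,\cdot}$ is the standard bilinear form; equivalently $M_L=L^\perp+\ZZ_p^n$, where $L^\perp$ is the annihilator hyperplane of $L$. This is well defined (two primitive generators differ by a unit of $\ZZ_p$), it is a full-rank $\ZZ_p$-submodule, and $L\mapsto M_L$ is plainly Borel. The crucial feature is that $L^\perp$ is the \emph{maximal divisible} $\ZZ_p$-submodule of $M_L$: it is a $\QQ_p$-subspace, hence divisible, while $M_L/L^\perp$ is a finitely generated torsion-free $\ZZ_p$-module of rank $1$ --- it embeds into $\QQ_p^n/L^\perp\cong\QQ_p$ --- hence isomorphic to $\ZZ_p$, which is reduced. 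Thus $M_L$ remembers $L^\perp$, and therefore $L$.

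Now I would check the reduction property. If $gM_L=M_{L'}$ for some $g\in\GL_n(\QQ)$, then $g$ carries the maximal divisible submodule to the maximal divisible submodule, so $gL^\perp=L'^\perp$; a short computation with the bilinear form then shows $g^{-\mathsf{T}}L=L'$, and since $g\mapsto g^{-\mathsf{T}}$ is an automorphism of $\GL_n(\QQ)$ this means $L$ and $L'$ lie in the same $\GL_n(\QQ)$-orbit. Conversely, if $hL=L'$ with $h\in\GL_n(\QQ)$, then $g_0:=h^{-\mathsf{T}}$ satisfies $g_0L^\perp=L'^\perp$, so $g_0M_L=L'^\perp+g_0\ZZ_p^n$ and $M_{L'}=L'^\perp+\ZZ_p^n$ have the same maximal divisible submodule and differ only in their images in the $1$-dimensional quotient $\QQ_p^n/L'^\perp\cong\QQ_p$. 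There both images are full-rank $\ZZ_p$-lattices, hence each is of the form $p^a\ZZ_p$, so they differ by a scalar $s\in\QQ^\times$; replacing $g_0$ by $sg_0$ (still in $\GL_n(\QQ)$, still sending $L^\perp$ to $L'^\perp$) yields $sg_0\,M_L=M_{L'}$.

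The point that makes this work --- and the thing I expect to be the main obstacle --- is passing to the annihilator hyperplane rather than using the line directly. The naive map $L\mapsto L+\ZZ_p^n$ fails for $n\ge 3$: that module records not only $L$ but also the image of $\ZZ_p^n$ in $\QQ_p^n/L$, a full-rank lattice in an $(n-1)$-dimensional space, and this is genuinely rigid data not shared by all $\GL_n(\QQ)$-equivalent lines. Dualizing replaces the line by a hyperplane, so the residual transverse information lives in a $1$-dimensional quotient, where any two lattices are rational scalar multiples of one another and the ambiguity can be absorbed into the reducing element. The remaining verifications --- Borelness of the two maps, and the elementary facts about divisible submodules and about lattices in $\QQ_p$ --- are routine.
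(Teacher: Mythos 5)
Your argument is correct and takes essentially the same approach as the paper's sketch: the paper sends a line $V$ to a module of the form $V^\perp\oplus\ZZ_p v$ (hyperplane plus a transverse rank-one $\ZZ_p$-lattice) and then applies Kurosh--Malcev, which is exactly your $M_L=L^\perp+\ZZ_p^n$. You have simply filled in the verifications---the maximal-divisible-submodule observation and the $\QQ^\times$-scalar adjustment---that the paper leaves to a quoted ``technical fact,'' but the key idea of dualizing so that the residual lattice data sits in a one-dimensional quotient is identical.
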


Since $\GL_n(\QQ)\actson\PP(\QQ_p^n)$ is closely related to a profinite
action, Proposition~\ref{prop_projred} will eventually enable us to
apply Ioana's theorem in the proof of Theorem~\ref{thm_thomas}.

\begin{proof}[Sketch of proof]
  Given a linear subspace $V\leq\QQ_p^n$, let $V^\perp$ denote its
  orthogonal complement.  Then there exists a vector $v$ such that
  $V^\perp\oplus\ZZ_pv$ is a full-rank submodule of $\QQ_p^n$.  By
  Theorem~\ref{thm_km}, this module corresponds to an element $f(V)\in
  R(n)$.  This is how the Kurosh--Malcev construction is used.

  To verify that it works, one uses the fact that the Kurosh--Malcev
  construction is $\GL_n(\QQ)$-preserving, together with the technical
  fact: if $\dim W=\dim W'=n-1$ and $W\oplus\ZZ_pw,W'\oplus\ZZ_pw'$
  are full-rank modules, then $W'=gW$ for some $g\in\GL_n(\QQ)$
  actually implies that $W'\oplus\ZZ_pw'=g(W\oplus\ZZ_pw)$ for some
  $g\in\GL_n(\QQ)$.
\end{proof}

\subsection*{The problem of freeness}

Suppose now that $n\geq2$ and that there exists a Borel reduction from
$\oiso_{n+1}$ to $\oiso_n$.  By Proposition~\ref{prop_projred}, there
exists a profinite, ergodic $\SL_{n+1}(\ZZ)$-space $X$ (namely
$X=\PP(\QQ_p^{n+1})$) and a countable-to-one homomorphism of orbits
$f$ from $\SL_{n+1}(\ZZ)\actson X$ to $\oiso_n$.  We can almost apply
Ioana's theorem, except that unfortunately $\oiso_n$ is not induced by
a free action of any group.  The following simple observation gives us
an approach for getting around this difficulty.

\begin{prop}
  \label{prop_free}
  Let $f$ be a homomorphism of orbits from $\Gamma\actson X$ into
  $\Lambda\actson Y$.  Suppose that there exists a fixed
  $K\leq\Lambda$ such that for all $x\in X$, $\stab_\Lambda(f(x))=K$.
  Then $N_\Lambda(K)/K$ acts freely on $f(X)$, and $f$ is a homomorphism
  of orbits from $\Gamma\actson X$ into $N_\Lambda(K)/K\actson f(X)$.
\end{prop}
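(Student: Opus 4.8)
The plan is to first pin down the $N_\Lambda(K)/K$-space on which $f$ actually lands, and only then verify the three ingredients: that there is a well-defined action, that it is free, and that $f$ remains a homomorphism of orbits for it. Write $N=N_\Lambda(K)$ and let $Y_K=\set{y\in Y\mid\stab_\Lambda(y)=K}$; by hypothesis $f(X)\subseteq Y_K$. I would begin by checking that $N$ preserves $Y_K$: for $y\in Y_K$ and $n\in N$ one has $\stab_\Lambda(ny)=n\,\stab_\Lambda(y)\,n^{-1}=nKn^{-1}=K$, so $ny\in Y_K$. Next, since every $y\in Y_K$ is by definition fixed by each element of its stabilizer $K$, the normal subgroup $K$ of $N$ acts trivially on $Y_K$, and hence the $N$-action on $Y_K$ factors through an action of $N/K$ on $Y_K$. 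This is the action referred to in the statement, with $f(X)$ understood as a subset of the ambient space $Y_K$.

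Freeness is then immediate: if $nK$ fixes some $y\in Y_K$, then $n\in\stab_\Lambda(y)=K$, so $nK$ is trivial in $N/K$. The only step carrying any real content is that $f$ is still a homomorphism of orbits after we shrink the target group from $\Lambda$ to $N/K$, and the key observation for this is that two points of $Y_K$ lying in the same $\Lambda$-orbit already lie in the same $N/K$-orbit. Indeed, if $y'=\lambda y$ with $y,y'\in Y_K$, then $K=\stab_\Lambda(y')=\lambda\,\stab_\Lambda(y)\,\lambda^{-1}=\lambda K\lambda^{-1}$, so $\lambda\in N$ and $y'=(\lambda K)\cdot y$. Combining this with the hypothesis that $f$ is a homomorphism of orbits into $\Lambda\actson Y$ with image inside $Y_K$: whenever $\Gamma x=\Gamma x'$ we get $\Lambda f(x)=\Lambda f(x')$, and therefore $(N/K)f(x)=(N/K)f(x')$, which is exactly what is needed.

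I do not expect a genuine obstacle here — the proposition is a bookkeeping exercise about stabilizers under conjugation — so the point most worth flagging is one of formulation rather than of proof. Namely, $f(X)$ itself need not be invariant under $N$ (the image of $f$ can miss part of an $N$-orbit), so ``$N_\Lambda(K)/K$ acts freely on $f(X)$'' is best read as the restriction of the genuine free action $N/K\actson Y_K$, equivalently as $N/K$ acting on the saturation $N\cdot f(X)\subseteq Y_K$. This is harmless and is precisely the form in which the result gets applied (compare the passage to the free action $H\actson\PP(V)$ in the proof of Corollary~\ref{cor_nonfree}).
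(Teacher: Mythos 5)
Your proof is correct and follows essentially the same route as the paper's: define the induced action of $N_\Lambda(K)/K$, deduce freeness from $\lambda y = y \Rightarrow \lambda \in K$, and observe that any $\lambda$ carrying $f(x)$ to $f(x')$ must normalize $K$ since both points have stabilizer $K$. Your closing remark that $f(X)$ itself need not be $N$-invariant — so one should really work with $Y_K$ or the $N$-saturation — is a reasonable bit of hygiene that the paper's terse proof glosses over, but it does not change the argument.
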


\begin{proof}
  By definition, we have that $N_\Lambda(K)/K$ acts on $f(X)$ by
  $\lambda K\cdot y=\lambda y$.  The action is free because $\lambda
  y=y$ implies that $\lambda\in K$.  To see that $f$ is still a
  homomorphism of orbits, just note that if $f(x')=\lambda f(x)$ then
  since $\stab_\Lambda(f(x))=\stab_\Lambda(f(x'))=K$, then we must
  have that $\lambda$ normalizes $K$.
\end{proof}

One can now formulate a strategy for proving Thomas's theorem along
the following lines:

\begin{claimI}
  By passing to a conull subset of $X$, we can assume without loss of
  generality that for all $x$ we have $\stab_{\GL_n(\QQ)}(f(x))=$ some
  fixed $K$.
\end{claimI}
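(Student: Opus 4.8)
The plan is to use ergodicity to pin down the stabilizer up to conjugacy, and then to straighten out the conjugating elements; the latter step costs nothing, because $\GL_n(\QQ)$ is countable.

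First I would observe that $\stab_{\GL_n(\QQ)}(f(x))$ is nothing but the automorphism group of the rank-$n$ group $f(x)$, regarded as a subgroup of the \emph{countable} group $\GL_n(\QQ)$. Since the action $\GL_n(\QQ)\actson R(n)$ is Borel and $f$ is Borel, for each fixed $g\in\GL_n(\QQ)$ the set $\set{x\mid g\cdot f(x)=f(x)}$ is Borel; hence $x\mapsto K_x:=\stab_{\GL_n(\QQ)}(f(x))$ is a Borel map from $X$ into the standard Borel space $2^{\GL_n(\QQ)}$ of subsets of $\GL_n(\QQ)$.

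Next, because $f$ is a homomorphism of orbits, for every $\gamma\in\SL_{n+1}(\ZZ)$ we have $f(\gamma x)=g\cdot f(x)$ for some $g\in\GL_n(\QQ)$, and hence $K_{\gamma x}=gK_xg^{-1}$. Thus the $\GL_n(\QQ)$-conjugacy class of $K_x$ is an $\SL_{n+1}(\ZZ)$-invariant Borel function of $x$, so by the ergodicity of $\SL_{n+1}(\ZZ)\actson X$ it agrees almost everywhere with the conjugacy class of a single fixed subgroup $K\leq\GL_n(\QQ)$. Discarding the invariant null set on which this fails, I may assume that $K_x$ is conjugate to $K$ for every $x\in X$.

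To finish I would make $K_x$ literally equal to $K$ by adjusting $f$. Fixing an enumeration $\GL_n(\QQ)=\set{h_1,h_2,\dots}$, for each $x$ let $g_x=h_i$ where $i$ is least with $h_iKh_i^{-1}=K_x$; such an $i$ exists by the previous step, and $x\mapsto g_x$ is Borel because $x\mapsto K_x$ is. Now put $f'(x)=g_x^{-1}\cdot f(x)$. Since $f'(x)$ lies in the same $\GL_n(\QQ)$-orbit as $f(x)$, the map $f'$ is again a Borel homomorphism of orbits from $\SL_{n+1}(\ZZ)\actson X$ into $\oiso_n$; it remains countable-to-one, since each fibre $(f')^{-1}(A)$ is contained in the countable union $\bigcup_{g\in\GL_n(\QQ)}f^{-1}(g\cdot A)$; and $\stab_{\GL_n(\QQ)}(f'(x))=g_x^{-1}K_xg_x=K$ for every $x\in X$, which is the claim. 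The subtlest point is that $f'$ keeps exactly the features of $f$ that the rest of the argument will use --- being a homomorphism of orbits and being countable-to-one --- and both of these survive precisely because $\GL_n(\QQ)$ is countable (the same fact that makes the Borel selection of the $g_x$ free of charge). I do not anticipate any deeper obstacle.
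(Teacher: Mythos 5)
The gap is in your second step, where you pass from the invariance of $x\mapsto[K_x]$ under the orbit equivalence to the conclusion that the conjugacy class of $K_x$ is a.e.\ constant ``by ergodicity.'' Plain ergodicity of $\SL_{n+1}(\ZZ)\actson X$ says that an invariant Borel map into a \emph{standard Borel space} is a.e.\ constant. But $x\mapsto[K_x]$ takes values in the quotient $2^{\GL_n(\QQ)}/\text{conjugacy}$, and conjugacy of subgroups of the countable group $\GL_n(\QQ)$ is a countable Borel equivalence relation whose quotient is not a standard Borel space. What you actually have is a Borel homomorphism $x\mapsto K_x$ from the orbit equivalence relation on $X$ to the conjugacy relation on subgroups; concluding that its image is concentrated in a single conjugacy class is a form of \emph{strong ergodicity} (ergodicity of $E_X$ with respect to $E_{\text{conj}}$), which is precisely the kind of rigidity statement the whole argument is trying to establish. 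Ordinary ergodicity gives you nothing here: an ergodic $\ZZ$-action, for instance, admits orbit-invariant Borel ``functions'' into nonsmooth quotients that are far from a.e.\ constant (the identity map to $X/E_X$ itself). Your last step (Borel selection of conjugators $g_x$ and replacing $f$ by $g_x^{-1}\cdot f(x)$) is fine, but it is downstream of the unproven claim.

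In fact the paper explicitly flags this: it states that Claim I ``seems to be as difficult to prove as Theorem~\ref{thm_thomas} itself,'' that Claim II is not even known in this generality, and that Claim I was only later established by Thomas in \cite{slocal} by a proof which ``requires all of the arguments below and more.'' The paper therefore abandons this route and instead fixes the quasi-endomorphism ring $\QEnd(f(x))$ rather than the automorphism group $\stab_{\GL_n(\QQ)}(f(x))$. The decisive difference is countability: $\QEnd(A)$ is a $\QQ$-subalgebra of $\mathrm{M}_{n\times n}(\QQ)$, and a subalgebra is determined by a finite $\QQ$-basis, so there are only countably many possible values. A Borel map with countable range has a non-null fibre, and one can then saturate to a conull set and adjust $f$ as in the proof of Corollary~\ref{cor_nonfree}. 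That is the countability your argument is missing: there is no reason to expect only countably many conjugacy classes of automorphism groups $\aut(A)$ for $A\in R(n)$, and without that, ergodicity cannot close the argument.
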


\begin{claimII}
  There cannot exist a nontrivial homomorphism from (a finite index
  subgroup of) $\SL_{n+1}(\ZZ)$ into $N_{\GL_n(\QQ)}(K)/K$.
\end{claimII}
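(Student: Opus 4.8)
The goal is to rule out a nontrivial homomorphism $\phi\from\Gamma_0\into N_{\GL_n(\QQ)}(K)/K$, where $\Gamma_0\leq\SL_{n+1}(\ZZ)$ has finite index and $K=\stab_{\GL_n(\QQ)}(f(x))$ for the reduction $f$ from Claim~I. The first step is to identify $K$. Since the points of $R(n)$ on which $f$ lands are torsion-free abelian groups $A\leq\QQ^n$ of full rank, the stabilizer $\stab_{\GL_n(\QQ)}(A)=\mathrm{Aut}(A)$ is precisely the automorphism group of $A$ as an abstract group, sitting inside $\GL_n(\QQ)$. A full-rank subgroup of $\QQ^n$ has automorphism group which is \emph{virtually abelian}; more precisely, $\mathrm{Aut}(A)$ is commensurable with its centre and has no free nonabelian subgroups. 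So $K$ is virtually abelian, and one can moreover arrange (after passing to a conull subset and invoking ergodicity, as in Claim~I) that $K$ is finite or otherwise very small. The key structural point is that $N_{\GL_n(\QQ)}(K)/K$ inherits, up to finite index, the property of having no nonabelian free subgroups — or at the very least, it admits no copy of a finite index subgroup of $\SL_{n+1}(\ZZ)$, which is what we actually need.

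The cleanest route is a dimension-and-rigidity argument. Suppose $\phi\from\Gamma_0\into N_{\GL_n(\QQ)}(K)/K$ is a nontrivial homomorphism; by Margulis's normal subgroups theorem (Theorem~8.1.2 of \cite{zimmer}), either $\ker\phi$ or $\im\phi$ is finite, and since $\Gamma_0$ has no nontrivial finite quotients of interest here we pass to a further finite index subgroup to make $\phi$ injective. Now $N_{\GL_n(\QQ)}(K)/K$ is a linear $\QQ$-group: writing $K$ for the Zariski closure of $K$ in $\GL_n$, the quotient $N_{\GL_n}(\bar K)/\bar K$ is an algebraic $\QQ$-group $\bm{G}$ whose dimension is at most $\dim\GL_n = n^2$, but in fact much smaller because $\bar K$, being virtually abelian, has a large normalizer only in a controlled way. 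The crucial estimate is that $\dim\bm{G}<(n+1)^2-1$, so that embedding the lattice $\Gamma_0\leq\SL_{n+1}(\ZZ)$ into $\bm{G}(\bar\QQ)$ contradicts the Theorem stated immediately above Corollary~\ref{cor_nonfree} (the dimension obstruction for embedding finite index subgroups of $\SL_n(\ZZ)$ into low-dimensional $\bar\QQ$-groups), applied with $n$ replaced by $n+1$.

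The dimension bookkeeping is the step requiring care. One has $\dim N_{\GL_n}(\bar K) = \dim\bar K + \dim\bigl(N_{\GL_n}(\bar K)/\bar K\bigr)$, so it suffices to show $\dim N_{\GL_n}(\bar K) - 2\dim\bar K < (n+1)^2-1 = n^2+2n$, i.e. roughly $\dim N_{\GL_n}(\bar K) < n^2 + 2n + 2\dim\bar K$. Since $\dim N_{\GL_n}(\bar K)\leq n^2$ trivially, this is automatic unless $\dim\bar K=0$, i.e. $\bar K$ is finite. In that degenerate case $\bar K$ is finite, $N_{\GL_n}(\bar K)$ has dimension $n^2$ (it contains the centre and in fact one checks it is all of $\GL_n$ when $K$ is central, or has dimension $n^2$ generically), and $N_{\GL_n}(\bar K)/\bar K$ has dimension $n^2 < (n+1)^2 - 1$, so the embedding theorem still applies directly. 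Either way we land strictly below the critical dimension $(n+1)^2-1$.

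**The main obstacle.**
The genuine difficulty is Claim~I — establishing that the stabilizers $\stab_{\GL_n(\QQ)}(f(x))$ are \emph{constant} on a conull set, not merely constant up to conjugacy. Once that is in hand, the present Claim~II is a soft combination of Margulis normal subgroups, Margulis superrigidity (via the dimension theorem quoted above), and the elementary observation that the automorphism group of a finite-rank torsion-free abelian group is virtually abelian, hence its normalizer quotient in $\GL_n$ cannot accommodate a lattice in a rank-$n$ group. I would handle the virtual-abelianness of $\mathrm{Aut}(A)$ by recalling that $A\otimes\QQ=\QQ^n$ and that $\mathrm{Aut}(A)$ preserves the lattice of "types" of elements, forcing it into a product of general linear groups over the relevant localizations in a way that leaves no room for nonabelian free subgroups; the details are routine but worth a sentence or two. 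So the final proof reads: reduce to $\phi$ injective via Margulis normal subgroups, Zariski-close everything, compute that the ambient $\bar\QQ$-group has dimension $<(n+1)^2-1$ using that $K$ is virtually abelian, and invoke the dimension obstruction theorem to get the contradiction.
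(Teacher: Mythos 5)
Your proposal tries to prove Claim~II directly with $K=\stab_{\GL_n(\QQ)}(f(x))=\aut(f(x))$, but the paper explicitly declines to do this: immediately after stating Claims~I and~II it remarks that ``Claim~II is not known to be true in this generality.''  The paper's actual argument abandons this setup and switches from isomorphism to quasi-isomorphism, replacing the automorphism group $\aut(f(x))$ by the quasi-endomorphism ring $K=\QEnd(f(x))$.  This is not a cosmetic change; it is precisely what makes the dimension argument go through, because $\QEnd(A)$ is a $\QQ$-subalgebra of $\mathrm{M}_{n\times n}(\QQ)$ and hence the set of $\QQ$-points of an algebraic $\QQ$-group inside $\mathrm{M}_{n\times n}$, whence $N_{\GL_n(\QQ)}(K)/K^\times$ embeds in the $\QQ$-points of an algebraic $\QQ$-group of dimension at most $n^2<(n+1)^2-1$.

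Two steps of your argument fail.  First, the claim that $\aut(A)$ is virtually abelian for a full-rank $A\leq\QQ^n$ is false: for $A=\ZZ^n$ one has $\aut(A)=\GL_n(\ZZ)$, which contains nonabelian free subgroups for $n\geq2$, so the structural control you want over $K$ simply is not available.  Second, and more fundamentally, the Zariski-closure maneuver does not yield an injective map into an algebraic group.  Writing $\bar K$ for the Zariski closure of $K$, the natural map $N_{\GL_n(\QQ)}(K)\to N_{\GL_n}(\bar K)(\QQ)/\bar K(\QQ)$ has kernel $N_{\GL_n(\QQ)}(K)\cap\bar K(\QQ)$, which is in general strictly larger than $K$ whenever $K$ is not already the $\QQ$-points of an algebraic group (and $\aut(A)$ generically is not).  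So a homomorphism $\phi\from\Gamma_0\into N_{\GL_n(\QQ)}(K)/K$ need not descend to an injective homomorphism into an algebraic $\bar\QQ$-group, and the dimension obstruction cannot be invoked.  (There is also an arithmetic slip: since $\dim\bigl(N_{\GL_n}(\bar K)/\bar K\bigr)=\dim N_{\GL_n}(\bar K)-\dim\bar K$, the target inequality should be $\dim N_{\GL_n}(\bar K)-\dim\bar K<(n+1)^2-1$, not $\dim N_{\GL_n}(\bar K)-2\dim\bar K<(n+1)^2-1$; the corrected inequality is automatic, which only underscores that the real obstruction is the non-algebraicity of $K$, not dimension counting.)  The passage to quasi-endomorphism rings is exactly the device in the paper that restores algebraicity, and it is what makes both the countability step needed for the analog of Claim~I and the dimension step in the analog of Claim~II actually go through.
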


This would yield a contradiction, since by Proposition
\ref{prop_projred} and Claim~I, Ioana's theorem would provide the
nontrivial homomorphism ruled out in Claim~II.  Unfortunately, this
approach doesn't turn out to be a good one.  The reason is that
Claim~I seems to be as difficult to prove as Theorem~\ref{thm_thomas}
itself.  Moreover, Claim~II is not known to be true in this
generality.  (In fact, Claim~I has recently been established by Thomas
in \cite{slocal}, but his proof actually requires all of the arguments
below and more.)

\subsection*{Use quasi-isomorphism instead}

To reduce the number of possibilities for $\stab(f(x))=\aut(f(x))$, we
change categories from isomorphism to quasi-isomorphism.  We say that
groups $A,B\leq\QQ^n$ are \emph{quasi-isomorphic}, written
$A\qiso_nB$, if $B$ is commensurable with an isomorphic copy of $A$.
Of course, $\oqiso_n$ is a courser relation than $\oiso_n$, but it is
easy to check that it is still a countable Borel equivalence relation
(indeed, the commensurability relation is a countable relation in this
case, see \cite[Lemma~3.2]{torsionfree}).  Hence, the map $f$ from
above is again a countable-to-one Borel homomorphism from
$\SL_{n+1}(\ZZ)\actson X$ to $\oqiso_n$.

Now, rather than attempting to fix the automorphism group of $f(x)$,
we shall fix the quasi-endomorphism ring $\QEnd(A)$ of $f(x)$.  Here,
if $A\leq\QQ^n$ then $g\in\GL_n(\QQ)$ is said to be a
\emph{quasi-endomorphism} of $A$ if $\phi(A)$ is commensurable with a
subgroup of $A$.  (Equivalently, $n\phi(A)\subset A$ for some
$n\in\NN$.)  Then unlike $\End(A)$, it is clear that $\QEnd(A)$ is a
$\QQ$-subalgebra of $\mathrm{M}_{n\times n}(\QQ)$.  It follows that
there are just countably many possibilities for $\QEnd(f(x))$, since
an algebra is determined by any $\QQ$-vector space basis for it.
Hence, there exists $K$ such that $\QEnd(f(x))=K$ for a nonnull set of
$x$.  Arguing as in the proof of Corollary~\ref{cor_nonfree}, we may
replace $X$ by a conull subset and adjust $f$ to assume that for all
$x\in X$, we have $\QEnd(f(x))=K$.

Thus, we have successfully obtained our analog of Claim~I for
quasi-isomorphism.  Indeed, copying the arguments in the proof of
Proposition~\ref{prop_free}, we see that $f$ is a homomorphism
\[f\from\SL_{n+1}(\ZZ)\actson X\longrightarrow
  N_{\GL_n(\QQ)}(K)/K^\times\actson f(X)
\]
and that $N_{\GL_n(\QQ)}(K)/K^\times$ acts freely on $f(X)$.  We may
therefore apply Ioana's theorem to suppose that there exists a finite
index subgroup $\Gamma_0\leq\PSL_{n+1}(\ZZ)$, a positive measure
$X_0\subset X$, and a homomorphism $\phi\from\Gamma_0\into
N_{\GL_n(\QQ)}(K)/K^\times$ such that for $x\in X_0$ and
$\gamma\in\Gamma$, we have
\[f(\gamma x)=\phi(\gamma)f(x)\;.
\]
Note that $\phi$ must be nontrivial, since if $\phi(\Gamma_0)=1$, then
this says that $f$ is $\Gamma_0$-invariant.  But then, by ergodicity
of $\Gamma_0\actson X_0$, $f$ would send a conull set to one point,
contradicting that $f$ is countable-to-one.

\subsection*{A dimension contradiction}

The set theory is now over; we have only to establish the algebraic
fact that the analog of Claim~II holds: there does not exist a
nontrivial homomorphism from $\Gamma_0$ into
$N_{\GL_n(\QQ)}(K)/K^\times$.  Again by Margulis's theorem on normal
subgroups, we can suppose that $\phi$ is an embedding.  Then using
Margulis's superrigidity theorem, it suffices to show that
$N_{\GL_n(\QQ)}(K)/K^\times$ is contained in an algebraic group of
dimension strictly smaller than $\dim(\bm{\PSL{}}_{n+1})=(n+1)^2-1$.

To see this, first note that since the subalgebra $K$ of
$\mathrm{M}_{n\times n}(\QQ)$ is definable from a vector space basis,
we have that $K=\bm{K}(\QQ)$, where $\bm{K}$ is an algebraic
$\QQ$-group inside $\mathrm{M}_{n\times n}$.  Basic facts from
algebraic group theory imply that $N_{\GL_n(\QQ)}(K)=\bf N(\QQ)$ and
$K^\times=\bm{K'}(\QQ)$, where again $\bm{N},\bm{K'}$ are algebraic
$\QQ$-groups inside $\mathrm{M}_{n\times n}$.  Finally,
$N_{\GL_n(\QQ)}(K)/K^\times$ is exactly $\bm{N}(\QQ)/\bm{K'}(\QQ)$,
which is contained in the algebraic $\QQ$-group $\bm{N}/\bm{K'}$.
Since the dimension of an algebraic group decreases when passing to
subgroups and quotients, we have
\[\dim(\bm{N}/\bm{K'})\leq\dim(\mathrm{M}_{n\times
  n})=n^2<(n+1)^2-1\;,
\]
as desired.  This completes the proof.\qed

\bibliographystyle{alpha}
\begin{singlespace}
  \bibliography{ioana2}

\begin{thebibliography}{Tho03b}

\bibitem[Cos10]{sln}
Samuel Coskey.
\newblock Borel reductions of profinite actions of {${\rm SL}\sb
  n(\mathbb{Z})$}.
\newblock {\em Ann. Pure Appl. Logic}, 161(10):1270--1279, 2010.

\bibitem[Cos12]{quasi}
Samuel Coskey.
\newblock The classification of torsion-free abelian groups of finite rank up
  to isomorphism and up to quasi-isomorphism.
\newblock {\em Trans. Amer. Math. Soc.}, 364(1):175--194, 2012.

\bibitem[Fuc73]{fuchs}
L{\'a}szl{\'o} Fuchs.
\newblock {\em Infinite abelian groups. {V}ol. {II}}.
\newblock Academic Press, New York, 1973.
\newblock Pure and Applied Mathematics. Vol. 36-II.

\bibitem[Fur99]{furman-oe}
Alex Furman.
\newblock Orbit equivalence rigidity.
\newblock {\em Ann. of Math. (2)}, 150(3):1083--1108, 1999.

\bibitem[Fur07]{furman-popa}
Alex Furman.
\newblock On {P}opa's cocycle superrigidity theorem.
\newblock {\em Int. Math. Res. Not. IMRN}, (19):Art. ID rnm073, 46, 2007.

\bibitem[Gao09]{gao}
Su~Gao.
\newblock {\em Invariant descriptive set theory}, volume 293 of {\em Pure and
  Applied Mathematics (Boca Raton)}.
\newblock CRC Press, Boca Raton, FL, 2009.

\bibitem[Hjo99]{hjorth-classification}
Greg Hjorth.
\newblock Around nonclassifiability for countable torsion free abelian groups.
\newblock In {\em Abelian groups and modules ({D}ublin, 1998)}, Trends Math.,
  pages 269--292. Birkh\"auser, Basel, 1999.

\bibitem[Ioa07]{ioanathesis}
Adrian Ioana.
\newblock {\em Some rigidity results in the orbit equivalence theory of
  non-amenable groups}.
\newblock ProQuest LLC, Ann Arbor, MI, 2007.
\newblock Thesis (Ph.D.)--University of California, Los Angeles.

\bibitem[Ioa11]{ioana}
Adrian Ioana.
\newblock Cocycle superrigidity for profinite actions of property ({T}) groups.
\newblock {\em Duke Math. J.}, 157(2):337--367, 2011.

\bibitem[Kec10]{kechris-global}
Alexander~S. Kechris.
\newblock {\em Global aspects of ergodic group actions}, volume 160 of {\em
  Mathematical Surveys and Monographs}.
\newblock American Mathematical Society, Providence, RI, 2010.

\bibitem[Kid08]{kida-oe}
Yoshikata Kida.
\newblock Orbit equivalence rigidity for ergodic actions of the mapping class
  group.
\newblock {\em Geom. Dedicata}, 131:99--109, 2008.

\bibitem[MS06]{monodshalom}
Nicolas Monod and Yehuda Shalom.
\newblock Orbit equivalence rigidity and bounded cohomology.
\newblock {\em Ann. of Math. (2)}, 164(3):825--878, 2006.

\bibitem[Pop07]{popa_paper}
Sorin Popa.
\newblock Cocycle and orbit equivalence superrigidity for malleable actions of
  {$w$}-rigid groups.
\newblock {\em Invent. Math.}, 170(2):243--295, 2007.

\bibitem[Tho03a]{torsionfree}
Simon Thomas.
\newblock The classification problem for torsion-free abelian groups of finite
  rank.
\newblock {\em J. Amer. Math. Soc.}, 16(1):233--258 (electronic), 2003.

\bibitem[Tho03b]{super}
Simon Thomas.
\newblock Superrigidity and countable {B}orel equivalence relations.
\newblock {\em Ann. Pure Appl. Logic}, 120(1-3):237--262, 2003.

\bibitem[Tho11]{slocal}
Simon Thomas.
\newblock The classification problem for {$S$}-local torsion-free abelian
  groups of finite rank.
\newblock {\em Adv. Math.}, 226(4):3699--3723, 2011.

\bibitem[Zim84]{zimmer}
Robert~J. Zimmer.
\newblock {\em Ergodic theory and semisimple groups}, volume~81 of {\em
  Monographs in Mathematics}.
\newblock Birkh\"auser Verlag, Basel, 1984.

\end{thebibliography}
\end{singlespace}

\end{document}